\DeclareMathOperator*{\support}{supp}
\DeclareMathOperator{\interior}{int}
\DeclareMathOperator*{\degree}{deg}
\DeclareMathOperator*{\Reg}{\mathbf{Reg}}
\DeclareMathOperator*{\capa}{cap}
\DeclareMathOperator*{\dist}{dist}
\DeclareMathOperator*{\Lip}{Lip}
\DeclareMathOperator*{\re}{Re}
\DeclareMathOperator*{\Prob}{Prob}
\DeclareMathOperator*{\sing}{Poles}
\DeclareMathOperator*{\spann}{span}
\renewcommand{\wp}{\mathscr{P}}
\renewcommand{\Re}{\re}
\renewcommand{\epsilon}{\varepsilon}
\newcommand{\wconverge}[1]{\rightharpoonup^*{#1}}
\newcommand{\C}{\mathbb{C}}
\newcommand{\N}{\mathbb{N}}
\newcommand{\M}{\mathcal{M}}
\newcommand{\bs}[1]{\boldsymbol{#1}}
\renewcommand{\phi}{\varphi}
\newcommand{\z}{\zeta}
\newcommand{\nalign}[1]{\begin{equation}\begin{split} #1 \end{split}\end{equation}}
\newcommand{\nalignstar}[1]{\begin{equation*}\begin{split} #1 \end{split}\end{equation*}}
\newtheorem{theorem}{Theorem}[section]
\newtheorem{corollary}{Corollary}[theorem]
\newtheorem{proposition}{Proposition}[section]
\newtheorem{definition}[theorem]{Definition}
\newtheorem{remark}[theorem]{Remark}
\newcounter{theexample} \setcounter{theexample}{1}
    \newenvironment{example}[1][]{\textbf{Example \arabic{theexample}. #1 \stepcounter{theexample}\rm}{}}
\begin{document}
\begin{abstract}
The Bernstein Markov Property, is an asymptotic quantitative assumption on the growth of uniform norms of polynomials or rational functions on a compact set with respect to $L^2_\mu$-norms, where $\mu$ is a positive finite measure. We consider two variants of the Bernstein Markov property for rational functions with restricted poles and compare them with the polynomial Bernstein Markov property finding out some sufficient conditions for the latter to imply the former. Moreover, we recover a sufficient \emph{mass-density} condition for a measure to satisfy the rational Bernstein Markov property on its support. Finally we present, as an application, a meromorphic $L^2$ version of the Bernstein Walsh Lemma.  
\keywords{Bernstein Markov Property, rational approximation, logarithmic capacity}
\end{abstract}

\author{Federico Piazzon}
\address{room 712 Department of Mathematics, Universit\'a di Padova, Italy. 
Phone +39 0498271260}
\email{\underline{fpiazzon@math.unipd.it}} 
\urladdr{http://www.math.unipd.it/~fpiazzon/   (work in progress)}
\subjclass{MSC 41A17 \and MSC 31A15 \and MSC 30E10}
\keywords{Bernstein Markov Property, rational approximation, logarithmic capacity, convergence of Greeen functions}
\thanks{Dep. of Mathematics University of Padua, Italy. Supported by Doctoral School on Mathematical Sciences.}
\date{\today}
\title[Rational BMP]{Some results on the rational Bernstein Markov Property in the complex 
plane}

\maketitle
\small
\begin{flushright}
\emph{This paper is dedicated to the memory of Professor Giuseppe Zampieri.}
\end{flushright}
\normalsize
\tableofcontents

\section{Introduction}
Let $K\subset \C$ be compact and have infinitely many points. In such a case $\|p\|_K:=\max_{z\in K}|p(z)|$ is a norm on the space $\wp ^k$ of polynomials of degree not greater than $k$ for any $k\in \N.$

Let us pick a positive finite Borel measure $\mu$ supported on $K$. When $\|\cdot\|_{L^2_\mu(K)}$ is a norm on $\wp^k$ we can compare it with the uniform norm on $K$. In fact, since $\wp^k$ is a finite dimensional normed vector space, there exist positive constants $c_1,c_2$ depending only on $(K,\mu,k)$ such that
$$c_1 \|p\|_{L^2_\mu}\leq \|p\|_K\leq c_2 \|p\|_{L^2_\mu}\;\;\forall p\in \wp^k.$$  

Notice that there exists such a $c_1$ because the measure $\mu$ is finite (one can take $c_1=\mu(K)^{-1/2}$) while $c_2$ is finite precisely when $\mu$ induces a norm.

The Bernstein Markov property is a quantitative asymptotic growth assumption on $c_2$ as $k\to \infty.$  Namely, the couple $(K,\mu)$ is said to enjoy the Bernstein Markov 
property if for any sequence $\{p_k\}: p_k\in \wp^k$ we have
\begin{equation}
 \limsup_k \left(\frac{\|p_k\|_{K}}{\|p_k\|_{L^2_\mu}}\right)^{1/k}\leq1.
 \label{BMP}
\end{equation}

The Bernstein Markov property can be equivalently defined in several complex variables and/or for weighted polynomials, i.e., functions of the type $pw^{\deg(p)}$ where $w$ is an admissible weight as in \cite{SaTo97}, see Definition \ref{wbmpdegf}.

We remark that the class of measures having the Bernstein Markov property is very close to the \textbf{Reg} class studied in the monograph \cite{StaTo92} (later generalized to the multidimensional case in \cite{B97}). Precisely, if we restrict our attention to measures $\mu$ whose support $\support \mu$ is a \emph{regular} set for the Dirichlet problem for the Laplace operator (i.e., $\C\setminus \support\mu$ admits a classical Green function $g$ with logarithmic pole at infinity such that $g|_{\partial \Omega_K}\equiv 0$, where $\Omega_K$ is the unbounded component of $\C\setminus K$) the two notions coincide.

The terminology Bernstein Markov property has been introduced in the framework of several complex variables and pluripotential theory; \cite{BB11b} and \cite{BlLe99}. In such a context it turns out that the Bernstein Markov property is a powerful tool in proving some deep results; see \cite{BB11}.    

In the present paper we prefer to deal with the Bernstein Markov property, though its name could be misleading, both because some of our proofs rely on the continuity of the Green function and because it looks more tailored to the proposed applications than the property of the class \textbf{Reg}.

A first motivation to the study of the Bernstein Markov property comes from approximation theory. If $(K,\mu)$ have the Bernstein Markov property then, given any holomorphic function $f$, the error of best polynomial approximation $p_k$ of degree not greater than $k$ to $f$ and the error of the approximation $q_k$ given by projection in $L^2_\mu$ on the subspace $\wp^k$ are asymptotically the same in the sense that for any $0<r<1$ and $f\in \mathscr C(K)$
$$\limsup_k \|f- p_k\|_K^{1/k}\leq r\;\;\text{ if and only if } \;\;\limsup_k \|f- q_k\|_{L^2_\mu}^{1/k}\leq r.$$
Consequently, one has a $L^2$ version of the Bernstein-Walsh Lemma \cite{Wal35} for Bernstein Markov measures relating the rate of best $L^2$ approximation of a function to its maximum radius of holomorphic extension; \cite[Prop. 9.4]{levnotes}. The several complex variables version of the Bernstein Walsh Lemma is usually referred as the Bernstein Walsh Siciak Theorem, see for instance \cite[Th. 9.7]{levnotes}.

Moreover, the Bernstein Markov property has been studied (see for instance \cite{BB11,BB11b,BLOLE07,B97,NqPh12}) in relation to (pluri-)potential theory, the study of plurisubharmonic functions in several complex variables. It turns out that such a property is fundamental both to recover the Siciak Zaharyuta extremal plurisubharmonic function and the (pluripotential) equilibrium measure (see \cite{levnotes}) by $L^2$ methods.

Lastly, Bernstein Markov measures play a central role in a recent theory of Large Deviation for random arrays and common zeroes of random polynomials; see for instance \cite{BlLe13,BlSh07} and references therein.

In the present paper we investigate two slightly modified versions of \eqref{BMP}. To do that 
we define the following classes of sequences of rational functions
\begin{align*}
\mathcal R(P)&:=\left\{\{p_k/q_k\}\,:\,p_k,q_k\in \wp^k,Z(q_k)\subseteq P\;\forall k\in \mathbb N \right\}\;\;\text{and}\\
\mathcal Q(P)&:=\left\{\{p_k/q_k\}\,:\,p_k,q_k\in \wp^k,\degree 
q_k=k,Z(q_k)\subseteq P \;\forall k\in \mathbb{N}\right\},
\end{align*}
where we set $Z(p):=\{z\in \C:\,p(z)=0\}$ and where $P\subset \C$ is any compact set that from now on we suppose to have empty intersection with $K$.

Throughout the paper we use the symbol $ \M^+(K)$ to denote the cone of positive Borel finite measures $\mu$ such that $\support \mu\subseteq K,$ adding a subscript $1$ for probability measures. 
\begin{definition}[Rational Bernstein Markov Property]\label{RBMPdef}
Let $K,P\subset \C$ be compact disjoint sets and $\mu\in \M^+(K).$
\begin{enumerate}[(i)]
\item \emph{(Rational Bernstein Markov Property.)} If
\begin{equation}
 \limsup_k\left(\frac{\|r_k\|_{K}}{\|r_k\|_{L^2_\mu}}\right)^{1/k}\leq 
1\;\;\forall \{r_k\}\in \mathcal R(P),
 \label{RBMP}
\end{equation}
then $(K,\mu,P)$ is said to enjoy the rational Bernstein Markov Property.
\item \emph{(sub-diagonal Rational Bernstein Markov Property.)} If
\begin{equation}
 \limsup_k\left(\frac{\|r_k\|_{K}}{\|r_k\|_{L^2_\mu}}\right)^{1/k}\leq 
1\;\;\forall \{r_k\}\in \mathcal Q(P),
 \label{SRBMP}
\end{equation}
then $(K,\mu,P)$ is said to enjoy the sub-diagonal rational Bernstein Markov property.
\end{enumerate}
\end{definition}

One motivation to study such properties is given by the discretization of a quite general class of vector energy problems performed in \cite{BlLeWi13}. Bloom, Levenberg and Wielonsky introduce a probability $\Prob(\cdot)$ on the space of sequences of arrays of points $\{\bs{z}^{(1)},\dots,\bs z^{(m)}\}$, where $\bs z^{(l)}=\{z_0^{(l)},\dots,z_k^{(l)}\}\in (K^{(l)})^{k+1}$, on a vector of compact sets $\{K^{(1)},\dots,K^{(m)}\}$ in the complex plane based on a vector of probability measures $\mu^{(i)}\in \mathcal M^+_1(K^{(i)})$ such that $(K^{(i)},\mu^{(i)},\cup_{j\neq i}K^{(j)})$ has the rational Bernstein Markov property. In \cite{BlLeWi13} the authors actually deal with \emph{strong} rational Bernstein Markov measures, which is a variant of rational Bernstein Markov property where weighted rational function are considered instead of standard ones, however their paper can be read in the \emph{un-weighted} setting picking (in their notation) $Q\equiv 0$. Then they prove a Large Deviation Principle (LDP) for measures canonically associated to arrays of points randomly generated according to $\Prob$. Also, they show that the validity of the LDP is not affected by the particular choice of $\{\mu^{(1)},\mu^{(2)},\dots,\mu^{(m)}\}$ that are only required to form a vector of rational Bernstein Markov measures.

Measures having the rational Bernstein Markov property are worth to be studied also from the approximation theory point of view. In fact, for such measures it turns out that the radius of maximum meromorphic extension with exactly $m$ poles of a function $f\in \mathscr C(K)$ is related to the asymptotic of its $L^2_\mu$ approximation numbers 
$$\left(\min_{\deg p\leq k,\deg q=m}\|f-p/q\|_{L^2_\mu}\right)^{1/k}.$$
The reader is referred to Section \ref{appl} for a precise statement. 

The paper is organized as follows. 

In \textbf{Section 2} we compare Definition \ref{RBMPdef} to the polynomial Bernstein Markov property. We address the following question. \emph{Are there sufficient additional conditions on }$(K,\mu,P)$ \emph{for the} polynomial Bernstein Markov property \emph{to imply the} rational Bernstein Markov property \emph{or the} sub-diagonal rational Bernstein Markov property\emph{?} 
A positive answer to both instances of such a question is given in Theorem \ref{mainresultsec2}, by means of an equivalent formulation of the problem suggested  in Propositions \ref{rationaltoweighted} and \ref{rationaltoweighted2}.

In \textbf{Section 3} we consider the classical $\Lambda^*$ condition (see \cite{StaTo92}) or mass density sufficient condition \cite{BlLe99} for the Bernstein Markov property. We notice (in Theorem \ref{PCcase})  that, due to Theorem \ref{mainresultsec2}, this condition implies the rational Bernstein Markov property as well, provided that $\hat K\cap P=\emptyset$ ($\hat K$ is the polynomial hull of $K$, see \eqref{polyhull}). 

In the case $\hat K\cap P\neq \emptyset, \hat P\cap K=\emptyset$  we show in Proposition \ref{techlemma} that it is possible to build a suitable conformal mapping $f$ such that the images $E$ of $K$ and $Q$ of $P$ under $f$ are in the relative position of the hypothesis of Theorem \ref{PCcase}. Thus, we derive (Theorem \ref{NPCcase}) a sufficient mass density condition for the rational Bernstein Markov property in a more general case. 

In \textbf{Section 4} we provide a uniform convergence result (see Theorem \ref{approximationtheorem}) for sequences of Green functions with poles in $P$ associated to compact subsets $K_j$ of $K$ whose logarithmic capacity (see \eqref{logcap} below) is converging to the one of $K$. Then we use such a theorem to give an alternative direct proof of the sufficient mass density condition of Theorem \ref{PCcase} .

In \textbf{Section 5} we present, as an application, a meromorphic $L^2_\mu$ version of the Bernstein Walsh Lemma; see Theorem 5. 

\section{Polynomial versus rational Bernstein Markov property}
Let us illustrate some significantly different situations which can occur by providing some easy examples where we are able to perform explicit computations.

We recall that, given an orthonormal basis $\{q_j\}_{j=1,2,\dots}$ of a separable Hilbert space $H$ (endowed with its induced norm $\|\cdot\|_H$) of continuous functions on a given compact set, the \emph{Bergman Function} $B_k(z)$ of the subspace $H_k:=\spann \{q_1,q_2,\dots,q_k\}$ is 
$$B_k(z):=\sum_{j=1}^k|q_j(z)|^2.$$
It follows by its definition and by Parseval Identity that for any function $f\in H_k$ one has
$|f(z)|\leq \sqrt{B_k(z)}\|f\|_{H},$ while the function $f(z):=\sum_{j=1}^k \bar q_j(z_0) q_j(z)$ achieves the equality at the point $z_0$, thus
\begin{equation}\label{extremal}B_k(z)=\max_{f\in H_k\setminus \{0\}}\left(\frac{|f(z)|}{\|f\|_H}\right)^2.
\end{equation}
\begin{example}
\begin{enumerate}[(a)]
\item Let $\mu$ be the arc length measure on the boundary $\partial \mathbb D$ 
of the unit disk. Let $K=\partial \mathbb D$ and $P=\{0\}$. 

Let us take a sequence $\{r_k\}=\left\{\frac{p_{l_k}}{z^k}\right\}$ in $\mathcal R(P)$ 
where $\degree p_{l_k}=l_k\leq k$, then we have
\begin{equation}\begin{split}\|r_k\|_{K}=\left\|\frac{p_{l_k}}{z^k}\right\|_K=\|p_{
l_k } \|_ { K } \leq \\
{\|B_{l_k}^{\mu}\|_{K}}^{1/2}\|p_{l_k}\|_{L^2(\mu)}={\|B_{l_k}^{\mu}\|_{
K } } ^ { 1/2 } \|r_k\|_ { L^2(\mu)}.\end{split}\label{bybergestimate}
\end{equation}
Here we indicated by $B_k^\mu(z)$ the Bergman function of the space 
$\left(\wp^k,\langle\cdot,\cdot\rangle_{L^2_\mu}\right)$.

For this choice of $\mu$ the orthonormal polynomials $q_k(z,\mu)$ are simply the normalized monomials $\left\{\frac{z^k}{\sqrt{2 \pi}}\right\}$, thus we have 
\begin{equation}
(\max_{K} B_{k}^\mu)^{1/2k}=\left(\max_{K} \frac{\sum_{j=0}^{k}|z|^{2j}}{2\pi}\right)^{1/2k}=\left(\frac{k+1}{2\pi}\right)^{1/2k}.\label{bergmax}
\end{equation}
It follows by \eqref{bybergestimate} and \eqref{bergmax} that $(K,\mu,P)$ has 
the rational Bernstein Markov Property. A similar computation shows that actually 
any $\nu$ such that $(K,\nu)$ has the Bernstein Markov Property 
is such that $(K,\nu,P)$ has the rational Bernstein Markov 
Property.
\item On the other hand, the same measure $\mu$ does not enjoy the sub-diagonal rational Bernstein Markov Property in the triple $(K,\mu,P)$ with  $K=\{1/2\leq|z|\leq 1\}$ and $P=\{0\}$ as the sequence of functions $\{1/z^k\}$ clearly shows: $\|z^{-k}\|_K=2^k,$ $\|z^{-k}\|_{L^2_\mu}=1.$ A fortiori the rational Bernstein Markov Property is not satisfied by $(K,\mu,P)$.
\item On the contrary, the arc length measure on the inner boundary of $K=\{1/2\leq|z|\leq 1\}$ and $P=\{0\}$ has the sub-diagonal rational Bernstein Markov Property, equation \eqref{SRBMP}, but neither the rational Bernstein Markov Property equation \eqref{RBMP}, nor the polynomial one, equation \eqref{BMP}, as is shown by the sequence $\left\{ z^k\right\}$. Notice that
\begin{align*}
&\left(\int_{\frac 1 2\partial\mathbb D}\left| z \right|^{2k} ds\right)^{1/2}=\sqrt{\pi}2^{-k} \text{ and } \|z^k\|_K=1, \text{ thus}\\
&\left(\frac{\|z^k\|_K } { \|z^k\|_ { L^2_\mu}}\right)^{1/k}=2\pi^{-1/2k}\to 2\nleq 1.
\end{align*}
In fact, in these last two examples the support of $\mu$ is not the whole set $K$, however we can provide a similar example also under the restriction $\support \mu= K.$
\item Let us take a dense sequence $\{z_j\}$ in $K=\{1/2\leq|z|\leq 1\}$ and a summable sequence of positive numbers $c:=\{c_j\}$ such that $\sum_{j=1}^\infty c_j=1$, we define 
$$\mu_c:=\frac 1{4\pi}ds|_{\partial \mathbb D}+\frac 1{2}\sum_{j=1}^\infty c_j\delta_{z_j}\in \mathcal M_1^+(K).$$ 
Notice that $\support \mu=K.$ It is well known that $ds|_{\partial \mathbb D}$ has the Bernstein Markov property for $\overline{ \mathbb D}$, so does the measure $\mu_c$ have.

On the other hand, we can show that $(K,\mu_c,\{0\})$ does not have the rational Bernstein Markov property, provided a suitable further assumption on $c$ and $z_j$.

Precisely, let $\{c_j\}\in \ell^1$ and a sequence $\{n_k\}$ of natural numbers be such that
\begin{align}
&\liminf_k \left(1+\sum_{j=k+1}^\infty c_j|z_j|^{2 n_k}  \right)^{1/2n_k}=1\nonumber\\
&0\leq k\leq n_k\label{sequenceassumption}\\
&\lim_k k/n_k <1.\nonumber
\end{align} 

We construct a sequence $\{\tilde r_{k}\}\in \mathcal Q(\{0\})$ of rational functions for which  \eqref{SRBMP} does not hold with $\mu=\mu_c$ and $P=\{0\}$; hence we show that $(K,\mu_c,P)$ does not have the sub-diagonal rational Bernstein Markov property.

Let us  define $r_{n_k}(z):=\frac{p_k(z)}{z^{n_k}}=\frac{\prod_{l=1}^kz-z_l}{z^{n_k}}.$ We notice that
\begin{align*}
\|r_{n_k}\|_K&=\max\left\{2^{n_k}\|p_k\|_{1/2\partial \mathbb D},\|p_k\|_{\partial \mathbb D}\right\}\geq 2^{n_k}\|p_k\|_{1/2\partial \mathbb D},\\
\|r_{n_k}\|_{L^2_{\mu_c}}&=\left(\frac 1{4\pi}\int_{\partial \mathbb D}|p_k|^2ds+\frac 1 2\sum_{j=k+1}^\infty\frac{c_j}{|z_j|^{2n_k}}|p_k(z_j)|^2\right)^{1/2}\\
&\leq \frac{\|p_k\|_{\partial \mathbb D}}{\sqrt 2}\left(1 +\sum_{j=k+1}^\infty\frac{c_j}{|z_j|^{2n_k}}\right)^{1/2}\\
&\leq 2^{-1/2+k} \|p_k\|_{1/2\partial \mathbb D}\left(1+\sum_{j=k+1}^\infty\frac{c_j}{|z_j|^{2n_k}}\right)^{1/2}.
\end{align*} 
Here we used the second equation in \eqref{sequenceassumption} and the classical Bernstein Walsh Inequality for $1/2\partial \mathbb D$ twice, e.g. $|p(z)|\leq \|p\|_{1/2\partial \mathbb D}\exp(\degree p \log^+(2|z|))$.
It follows that
$$\left(\frac{\|r_{n_k}\|_K}{\|r_{n_k}\|_{L^2_{\mu_c}}}\right)^{1/n_k}\geq 2^{1-\frac k{n_k}+\frac 1{2n_k}} \frac{1}{\left(1+\sum_{j=k+1}^{+\infty}c_j|z_j|^{-2n_k}\right)^{1/2n_k}}. $$

We can construct the sequence $\{\tilde r_m\}$ above setting $\tilde r_m=r_{n_k}$ for any $m$ for which it exists $k$ with $m=n_k$ and picking any other rational function with at most $m$ zeros and a $m$-order pole at $0$ for other values of $m$.
Now we use the assumptions \eqref{sequenceassumption} and properties of $\limsup$ to get
\begin{equation*}
\begin{split}
\limsup_m \left(\frac{\|r_m\|_K}{\|r_m\|_{L^2_{\mu_c}}}\right)^{1/m}\geq\limsup_k \left(\frac{\|r_{n_k}\|_K}{\|r_{n_k}\|_{L^2_{\mu_c}}}\right)^{1/n_k}\\
>\frac{1}{\liminf_k \left(1+\sum_{j=k+1}^\infty c_j|z_j|^{2 n_k}  \right)^{1/2n_k}}=1.
\end{split}
\end{equation*}

Thus $(K,\mu_c,\{0\})$ does not have the rational  sub-diagonal Bernstein Markov property, since the rational Bernstein Markov is a stronger property.
\item Lastly, the measure $d \mu:=d\mu_1+d\mu_2:=1/2\, ds|_{\partial \mathbb D}+1/2\,ds|_{1/2\partial \mathbb D}$ (here $ds$ denotes the standard arc length measure and $1/2\partial \mathbb D:=\{z:|z|=1/2\}$) has the rational Bernstein Markov property for $K=\partial \mathbb D\cup1/2 \partial \mathbb D$, $P=\{0\}$.

In order to show that, we pick any sequence of polynomials $\{p_k\}$ of degree not greater than $k$ and $\{m_k\}$ where $m_k\in\{0,1,\dots,k\}$, we consider the Bergman function for $\mu_1$ and $\mu_2$ and using \eqref{extremal} we get
\nalignstar{ &\left\|\frac{p_k}{z^{m_k}}\right\|_{L^2_\mu}=\|p_k\|_{L^2_{\mu_1}}+2^{m_k}\|p_k\|_{L^2_{\mu_2}}\geq\\
&(B_{k}^{\mu_1}(z_1))^{-1/2}|p_k(z_1)|\Big |_{z_1\in \partial\mathbb D}+2^{m_k} (B_{k}^{\mu_2}(z_2))^{-1/2}|p_k(z_2)|\Big|_{z_2\in 1/2\partial \mathbb D}=\\
&\left(\left(\frac {2\pi}{\sum_{j=0}^k|z_1^{j}|^2}\right)^{1/2}|p_k(z_1)|\right)_{z_1\in \partial\mathbb D}   +\;2^{m_k} \left(\frac {2\pi}{\sum_{j=0}^k2|{z_2}^{j}|^2}\right)^{1/2}p_k(z_2)|_{z_2\in \partial\mathbb D}.
}
Now we pick $z_1\in \partial\mathbb D$ and $z_2\in 1/2\partial\mathbb D$ maximizing $|p_k|$ and we get
\nalignstar{&\left\|\frac{p_k}{z^{m_k}}\right\|_{L^2_\mu}\geq \sqrt{\frac{2\pi} {k+1}}\|p_k\|_{\partial \mathbb D}\,+\,2^{m_k}\sqrt{\frac{3\pi}{4^{k+1}-1}}\|p_k\|_{1/2\partial \mathbb D}\geq\\
&\sqrt{\frac{3\pi}{4^{k+1}-1}}\cdot\left( \|p_k\|_{\partial \mathbb D}\,+\,2^{m_k} \|p_k\|_{1/2\partial \mathbb D}\right)=\\
&\sqrt{\frac{3\pi}{4^{k+1}-1}}\left( \left\|\frac{p_k}{z^{m_k}}\right\|_{\partial \mathbb D}\,+ \left\|\frac{p_k}{z^{m_k}}\right\|_{1/2\partial \mathbb D}\right)\geq\\
&\sqrt{\frac{3\pi}{4^{k+1}-1}} \left\|\frac{p_k}{z^{m_k}}\right\|_K.
}
It follows that, denoting $p_k/z^k$ by $r_k$, we have
$$\limsup_k \left(\frac{\|r_k\|_K}{\|r_k\|_{L^2_\mu}}\right)^{1/k}\leq \lim_k\left(\frac {4^{k+1}-1}{3\pi}  \right)^{1/(2k)}=1,$$
hence $(K,\mu,\{0\})$ has the rational Bernstein Markov property.
\end{enumerate}
\end{example}

The relation between these three properties is a little subtle: the examples above show that different aspects come in play from the geometry of $K$ and $P$ and the classes $\mathcal R(P),\mathcal Q(P)$. It will be clear later that the measure theoretic and potential theoretic features are important as well.

We relate the sub-diagonal rational Bernstein Markov property and the rational Bernstein Markov property to the weighted Bernstein Markov property with respect to a specific class of weights in Proposition \ref{rationaltoweighted} and \ref{rationaltoweighted2}; to do that we first recall the definition of weighted Bernstein Markov Property.
\begin{definition}[Weighted Bernstein Markov Property]\label{wbmpdegf}
Let $K\subset \C$ be a closed set and $w:K\rightarrow [0,+\infty[$ be an upper semicontinuous 
function, let $\mu\in \M^+(K)$, then the triple $[K,\mu,w]$ is said to have the 
weighted Bernstein Markov property if for any sequence of polynomials $p_k\in 
\wp^k$ we have
\begin{equation}
 \label{WBMP}
\limsup_k\left(\frac{\|p_kw^{k}\|_{K}}{\|p_kw^{k}\|_{L^2_\mu}}\right)^{1/k}\leq 
1.
\end{equation}
\end{definition}
In what follows we deal with weak$^*$ convergence of measures. We recall that, given a metric space $X$ and a Borel measure $\mu$ on $X$, the sequence of measures $(\mu_i)$ on $X$ is said to weak$^*$ converge to $\mu$ if for any bounded continuous function $f$ we have $\lim_i\left|\int_X f d\mu-\int_X fd\mu_i\right|=0;$ in such a case we write $\mu_i\wconverge{}\mu.$ Also, we recall that the space of Borel probability measures $\M^+_1(X)$ is weak$^*$ sequentially compact, that is for any sequence there exists a weak$^*$ converging subsequence.  

If $X$ is a compact space, then $\mathscr C(X)$ is a separable Banach space. It turns out that the space of Borel measures is isometrically isomorphic to the dual space $\mathscr C^*(X)$ and the topology of weak$^*$ convergence is generated by the family of semi-norms $\{p_f:\, f\in \mathcal F\}$ where $p_f(\mu):=|\int_X fd\mu|$ and $\mathcal F$ is any countable dense subset of $\mathscr C(X).$

Using these facts it is not difficult to prove the following statement that we will use in the proof of the next proposition.

\emph{Let $P$ be a compact set in $\C$ and $\sigma$ a Borel measure supported on it having total mass equal to $1$. There exists a sequence of arrays $\{(z_1^{(k)},\dots,z_k^{(k)})\}$ of points of $P$ such that we get}
\begin{equation}\label{wconvlemma}
\sigma_k:=\frac 1 k \sum_{j=1}^k\delta_{z_j^{(k)}}\wconverge{} \sigma.
\end{equation}

For any compact set  $P$ we introduce the following notation
\begin{align*}
\mathcal W(P)&:=\{e^{U^\sigma}: \sigma\in \M^+(P),0\leq \sigma(P)<\infty\}\;,\\
\mathcal W_1(P)&:=\{e^{U^\sigma}: \sigma\in \M_1^+(P)\},
\end{align*}
where $U^\sigma(z):=-\int \log|z-\z|d\sigma(\z)$ is the logarithmic potential of the 
measure $\sigma$ and we set by definition $U^0\equiv 0.$

\begin{proposition}\label{rationaltoweighted}
 Let $K\subset\C$ be a non polar compact set, $\mu\in \M^+(K)$ and $P$ any 
compact set disjoint by $K.$ Then the following are equivalent
\begin{enumerate}[(i)]
 \item $\forall w\in\mathcal W_1(P)$ the triple $[K,\mu,w]$ has the weighted Bernstein Markov 
Property.
\item $(K,\mu,P)$ has the sub-diagonal rational Bernstein Markov Property.
\end{enumerate} 
\end{proposition}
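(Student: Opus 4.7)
The plan is to identify the elements of $\mathcal Q(P)$ with weighted polynomials whose weights lie in $\mathcal W_1(P)$ and come from \emph{atomic} probability measures, then bridge the two characterizations by means of the weak-$*$ approximation \eqref{wconvlemma} together with the sequential compactness of $\M_1^+(P)$. Concretely, given $r_k = p_k/q_k \in \mathcal Q(P)$ with $q_k(z) = c_k\prod_{j=1}^{k}(z-z_j^{(k)})$ and all $z_j^{(k)} \in P$, the normalized zero-counting measure $\sigma_k := \frac{1}{k}\sum_{j=1}^{k}\delta_{z_j^{(k)}}$ is a probability measure on $P$ and
\[
|r_k(z)| \;=\; |c_k|^{-1}\,|p_k(z)|\,e^{kU^{\sigma_k}(z)} \;=\; |c_k|^{-1}\,|p_k(z)\,w_k(z)^{k}|,\qquad w_k := e^{U^{\sigma_k}} \in \mathcal W_1(P),
\]
and the prefactor $|c_k|^{-1}$ cancels in every ratio $\|r_k\|_K/\|r_k\|_{L^2_\mu}$. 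Conversely, given $\sigma \in \M_1^+(P)$, the approximation \eqref{wconvlemma} produces atomic measures $\sigma_k \wconverge{} \sigma$, so \emph{every} $w = e^{U^\sigma} \in \mathcal W_1(P)$ is realised in this way.

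The crucial technical ingredient is the disjointness $K \cap P = \emptyset$, which makes $(z,\zeta)\mapsto -\log|z-\zeta|$ continuous, hence uniformly continuous, on the compact product $K \times P$. This renders the family $\{U^\sigma|_K : \sigma \in \M_1^+(P)\}$ equicontinuous, so any weak-$*$ convergence $\sigma_k \wconverge{} \sigma$ in $\M_1^+(P)$ upgrades to uniform convergence on $K$. Setting $\eta_k := \max_{z\in K}|U^{\sigma_k}(z) - U^\sigma(z)|$ we get $\eta_k \to 0$, and therefore the pointwise two-sided bound $e^{-k\eta_k}\,w(z)^k \leq w_k(z)^k \leq e^{k\eta_k}\,w(z)^k$ on $K$, whose $k$-th root contributes only the harmless factor $e^{\pm\eta_k}\to 1$.

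For (ii) $\Rightarrow$ (i), I fix $w = e^{U^\sigma} \in \mathcal W_1(P)$ and a test sequence $p_k \in \wp^k$, use \eqref{wconvlemma} to form $q_k$ so that the zero-counting measures satisfy $\sigma_k \wconverge{} \sigma$, set $r_k := p_k/q_k \in \mathcal Q(P)$, and combine the above two-sided bound with (ii) to obtain
\[
\left(\frac{\|p_k w^k\|_K}{\|p_k w^k\|_{L^2_\mu}}\right)^{1/k} \;\leq\; e^{2\eta_k}\left(\frac{\|r_k\|_K}{\|r_k\|_{L^2_\mu}}\right)^{1/k},
\]
whose $\limsup$ is at most $1$. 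For (i) $\Rightarrow$ (ii), I would argue by contradiction: given $\{r_{k_n}\} \in \mathcal Q(P)$ violating \eqref{SRBMP}, the weak-$*$ sequential compactness of $\M_1^+(P)$ yields a further subsequence along which the associated zero-counting measures satisfy $\sigma_{k_n} \wconverge{} \sigma \in \M_1^+(P)$; applying (i) to $w = e^{U^\sigma}$ and reversing the same exponential inequality contradicts the violation.

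The main obstacle I foresee is the non-robustness of the map $w\mapsto w^k$ under uniform perturbations: an estimate $|w_k - w|\leq \epsilon$ is useless once we raise to the $k$-th power. The argument is saved precisely because the (weighted) Bernstein Markov property is itself an exponential statement in $k$, so it is the \emph{uniform} convergence of the \emph{exponents} $U^{\sigma_k} \to U^\sigma$ on $K$ that is required, with a rate $\eta_k\to 0$ that becomes harmless after taking $k$-th roots. Such uniform control hinges entirely on $K \cap P = \emptyset$: without it $U^\sigma$ can blow up on $K$, and both the equicontinuity step and the passage from weak-$*$ to uniform convergence break down.
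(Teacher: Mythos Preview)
Your proposal is correct and takes essentially the same approach as the paper: both directions rest on the identification $|r_k|=|p_k|\,e^{kU^{\sigma_k}}$ via the zero-counting measure $\sigma_k$, the uniform convergence $U^{\sigma_k}\to U^\sigma$ on $K$ granted by $K\cap P=\emptyset$, weak-$*$ sequential compactness of $\M_1^+(P)$ for (i)$\Rightarrow$(ii), and the discretization \eqref{wconvlemma} for (ii)$\Rightarrow$(i). Your equicontinuity argument on $K\times P$ (the paper invokes continuity of the $\log$ kernel away from $0$) and the explicit cancellation of the leading coefficient $c_k$ are minor expository differences only.
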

\begin{proof}[ of (i) implies (ii).]
Let us pick a sequence $\{r_k\}=\{ p_k / q_k \}$ in $\mathcal Q(P),$ where 
$q_k:=\prod_{j=1}^k(z-z_j),$ and let us set $\sigma_k:=\frac 1 {k}\sum_{j=1}^k\delta_{z_j}.$ Then we can notice that 
\begin{align*}
U^{\sigma_k}=&\int \log \frac 1 {|z-\z|}d\sigma_k(\z)=\frac 1 k\sum_{j=1}^k\log 
\frac{1}{|z-z_j|}=-\frac{1}{k}\log|q_k|. 
\end{align*}
 Thus, setting $U_k:=U^{\sigma_k},$ we have
\begin{equation}\label{extremalsequence}
a_k:=\left(\frac{\|r_k\|_K}{\|r_k\|_{L^2_{\mu}}}\right)^{1/k}=\left(\frac{
\|p_ke^{(kU_k)}\|_K }{\|p_ke^{(kU_k)}\|_{L^2_\mu} }\right)^{1/k}.
\end{equation}
Now we pick any maximizing subsequence $j\mapsto k_j$ for $a_k$, that is $\limsup_k 
a_k=\lim_j a_{k_j}.$ Let us pick any weak$^*$ limit $\sigma\in 
\M^+_1(P)$ and a subsequence $l\mapsto j_l$ such that 
$\tilde\sigma_l:=\sigma_{k_{j_l}}\wconverge{}\sigma.$ Moreover $\lim_l b_l:=\lim_l 
a_{k_{j_l}}=\limsup_k a_k.$

Let us notice that $U:=U^\sigma$ and all $U_l:=U^{\tilde\sigma_l}$ are harmonic functions on $\C\setminus P,$ moreover, due to \cite[Th. 6.9 I.6]{SaTo97}, $\{U_l\}$ converges quasi everywhere to $U.$ Notice that $U^{\tilde\sigma_l}:=-E\ast \tilde\sigma_l,$ where $E(z):=\log|z|$ is a locally absolutely continuous function on $\C\setminus \{0\}$, hence weak convergence of measures supported on $P$ implies local uniform convergence of potentials on $\C\setminus P.$

We can exploit this uniform convergence as follows. For any $\epsilon>0$ there exists $l_\epsilon$ such that for any $l>l_\epsilon$ we have
\begin{equation}\label{doubleestimate}
 U-\epsilon\leq U_l\leq U +\epsilon\;\;\text{uniformly on } K.
\end{equation}
Now we denote ${k_{j_l}}$ by $\tilde k_l$ and $ p_{\tilde k_l}$ by $\tilde p_l.$ It follows by 
\eqref{doubleestimate} that for $l$ large enough
\begin{align*}
 \|\tilde p_le^{\tilde k_l U_l}\|_K&\leq\|\tilde p_le^{\tilde k_l(U+\epsilon)}\|_K\leq 
e^{\tilde k_l\epsilon}\|\tilde p_le^{\tilde k_lU}\|_K,\\
\|\tilde p_le^{\tilde k_lU_l}\|_{L^2_\mu}&\geq\|\tilde 
p_le^{\tilde k_l(U-\epsilon)}\|_{L^2_\mu}\geq e^{-\epsilon\tilde k_l}\|\tilde 
p_le^{\tilde k_l U}\|_{L^2_\mu}  \;\text{ and thus}\\
\frac{\|\tilde p_le^{\tilde k_lU_l}\|_K}{\|\tilde p_le^{\tilde k_lU_l}\|_{L^2_\mu}}&\leq 
e^{2\tilde k_l\epsilon}\frac{\|\tilde p_le^{\tilde k_l U}\|_K}{\|\tilde 
p_le^{\tilde k_l U}\|_{L^2_\mu}}.
\end{align*}
Hence, exploiting $w:=e^{U}\in \mathcal W_1(P)$ and $\mu$ having the weighted Bernstein Markov property for such a weight, we have
\begin{align*}
 \limsup_k a_k&=\lim_l\left( \frac{\|\tilde p_le^{\tilde k_lU_l}\|_K}{\|\tilde p_le^{\tilde 
k_lU_l}\|_{L^2_\mu}} \right)^{1/\tilde k_l}\leq e^{2\epsilon} \lim_l\left( 
\frac{\|\tilde p_le^{\tilde k_lU}\|_K}{\|\tilde 
p_le^{\tilde k_l U}\|_{L^2_\mu}} \right)^{1/\tilde k_l}\\
&\leq e^{2\epsilon } \lim_l\left( 
\frac{\|\tilde p_lw^{\tilde k_l}\|_K}{\|\tilde 
p_lw^{\tilde k_l}\|_{L^2_\mu}} \right)^{1/\tilde k_l} = 
e^{2\epsilon }\longrightarrow 1\;\;\;\text{as }\epsilon\to 0.
\end{align*}\qed
\end{proof}
\begin{proof}[ of (ii) implies (i).] Suppose by contradiction that there exists $\sigma \in \mathcal W_1(P)$ such that $[K,\mu,\exp U^\sigma]$ does not have the weighted Bernstein Markov Property. 

We pick $\{z_1^{(k)},\dots,z_k^{(k)}\}_{k=1,\dots} $ and $\sigma_k= \frac 1 k \sum_{j=1}^k\delta_{z_j^{(k)}}$ as in \eqref{wconvlemma}.  

Let us set $w=\exp U^\sigma,$ $w_k=\exp U^{\sigma_k}.$ We can perform the same reasoning as above, using the absolute continuity of the $\log$ kernel away from $0$, to get $U^{\sigma_k}\to U^\sigma$ uniformly on $K.$ Thus for any $\epsilon>0$ we have $U^{\sigma_k}-\epsilon\leq U^{\sigma}\leq U^{\sigma_k}+\epsilon$ uniformly on $K$ for $k$ large enough. That is
\begin{equation}
 w_ke^{-\epsilon}\leq w\leq w_ke^{\epsilon}\;\;\text{ uniformly on 
$K$ for $k$ large enough.}\label{uniformestimate} 
\end{equation}

Notice that given any sequence $\{p_k\}$ such that $p_k\in \wp^k$ we have 
$$\{r_k\}:=\{p_k w_k^k\}=\left\{\frac{p_k}{\prod_{j=1}^k (z-z_j)} \right\}\in \mathcal Q(P).$$
Since we assumed that $[K,\mu,w]$ does not have the weighted Bernstein Markov property we can pick $p_k$ such that, using \eqref{uniformestimate},
\begin{align*}
1<& \limsup_k\left( \frac{\|p_kw^k\|_K}{\|p_kw^k\|_{L^2_\mu}}\right)^{1/k}
\leq& \limsup_k e^{2\epsilon}\left(\frac{\|p_kw_k^{k}\|_K}{\|p_kw_k^{k}\|_{L^2_\mu}}
\right)^ { 1/k }\\
\leq& e^{2\epsilon}\to 1\;\text{ as }\epsilon\to 0.
\end{align*}  This is a contradiction.\qed
\end{proof}

We can prove the following variant of the previous proposition by some minor modifications of the proof.

\begin{proposition}\label{rationaltoweighted2}
Let $K\subset\C$ be a non polar compact set, $\mu\in \M^+(K)$ and $P$ any 
compact set disjoint by $K.$ Then the following are equivalent
\begin{enumerate}[(i)]
 \item $\forall w\in\mathcal W(P)$ the triple $[K,\mu,w]$ has the weighted Bernstein Markov Property.
\item $(K,\mu,P)$ has the rational Bernstein Markov Property.
\end{enumerate} 
\end{proposition}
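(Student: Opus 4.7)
The plan is to adapt the proof of Proposition~\ref{rationaltoweighted} to the two new features of this generalization: denominators $q_k\in\wp^k$ may now have degree $d_k$ strictly less than $k$ (so their empirical zero-measure has total mass $\le 1$), and the weights $e^{U^\sigma}$ in $\mathcal W(P)$ may have $\sigma(P)=:t\in[0,\infty)$ arbitrary rather than equal to $1$. In both directions the algebra is the same---rewrite $|r_k|=|\tilde p_k|\,e^{kU^{\sigma_k}}$ and $w^k=e^{kU^\sigma}$, then pass to a weak-$*$ limit---but the bookkeeping of degrees requires care.

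For $(i)\Rightarrow(ii)$, given $\{r_k\}=\{p_k/q_k\}\in\mathcal R(P)$ with $d_k\le k$, factor out the leading coefficient of $q_k$ and absorb it into the numerator, producing $\tilde p_k\in\wp^k$ with $|r_k|=|\tilde p_k|\,e^{kU^{\sigma_k}}$, where $\sigma_k:=\frac{1}{k}\sum_{z:q_k(z)=0}\delta_z$ is a subprobability measure on $P$. After extracting a maximizing subsequence for $(\|r_k\|_K/\|r_k\|_{L^2_\mu})^{1/k}$, weak-$*$ sequential compactness of the closed subcone $\{\nu\in\M^+(P):\nu(P)\le 1\}$ yields a further subsequence along which $\sigma_k\wconverge{}\sigma\in\M^+(P)$. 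Since $K\cap P=\emptyset$ the $\log$-kernel is continuous on $K\times P$, so $U^{\sigma_k}\to U^\sigma$ uniformly on $K$, and the sandwich estimates \eqref{doubleestimate} then go through verbatim with the weight $w:=e^{U^\sigma}\in\mathcal W(P)$.

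For $(ii)\Rightarrow(i)$ the case $t=0$ reduces to the polynomial BMP, which follows from (ii) by taking $q_k\equiv 1$. If $t>0$, I would argue by contradiction: assume $(\|p_kw^k\|_K/\|p_kw^k\|_{L^2_\mu})^{1/k}\ge 1+\delta$ along some subsequence, set $N_k:=\lceil kt\rceil$ and $M_k:=\max(k,N_k)$, and apply \eqref{wconvlemma} to the probability measure $\sigma/t$ to produce arrays $(z_1^{(k)},\dots,z_{N_k}^{(k)})\subset P$ for which $\sigma_k:=\frac{1}{k}\sum_{j=1}^{N_k}\delta_{z_j^{(k)}}\wconverge{}\sigma$ (using $N_k/k\to t$). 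Setting $q_k:=\prod_{j=1}^{N_k}(z-z_j^{(k)})$, one has $p_k,q_k\in\wp^{M_k}$ and $Z(q_k)\subseteq P$, so the sequence equal to $r_k:=p_k/q_k$ at index $M_k$ (and arbitrary elsewhere) lies in $\mathcal R(P)$. Uniform convergence of potentials gives $\|r_k\|_K/\|r_k\|_{L^2_\mu}\ge e^{-2k\epsilon}\|p_kw^k\|_K/\|p_kw^k\|_{L^2_\mu}$; taking $1/M_k$-th roots, using $k/M_k\to 1/\max(1,t)$, and letting $\epsilon\to 0$ produces a $\limsup$ at least $(1+\delta)^{1/\max(1,t)}>1$, contradicting the rational BMP.

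The only real obstruction compared with the sub-diagonal/probability case of Proposition~\ref{rationaltoweighted} is this degree mismatch in $(ii)\Rightarrow(i)$ when $\sigma(P)>1$: a faithful discretization of $\sigma$ needs strictly more than $k$ zeros, but $\mathcal R(P)$ only admits denominators of degree at most $k$. The re-indexing $M_k=\max(k,\lceil kt\rceil)$ resolves this at the mild cost of weakening the contradiction exponent from $1+\delta$ to $(1+\delta)^{1/\max(1,t)}$, which is still strictly greater than $1$.
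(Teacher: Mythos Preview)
Your proof is correct and follows the same strategy as the paper's: rewrite rational functions as weighted polynomials via logarithmic potentials of empirical zero-measures, pass to weak-$*$ limits, and exploit uniform convergence of potentials on $K$ (continuity of the $\log$ kernel on $K\times P$). The $(i)\Rightarrow(ii)$ direction is identical to the paper's argument.

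In the $(ii)\Rightarrow(i)$ direction the paper also argues by contradiction and discretizes $\hat\sigma:=\sigma/c$ (where $c:=\sigma(P)$) via \eqref{wconvlemma}, but it writes ``pick a sequence of natural numbers $0\le m_k\le k$ such that $\lim_k m_k/k=c$,'' which tacitly assumes $c\le 1$. Since $\mathcal W(P)$ as defined admits measures of arbitrary finite mass, the case $c>1$ is not literally covered by the paper's proof. Your re-indexing $M_k=\max(k,\lceil kt\rceil)$ fixes precisely this degree mismatch: the denominator $q_k$ of degree $N_k\approx kt$ and the numerator $p_k$ of degree $\le k$ both sit in $\wp^{M_k}$, and taking $1/M_k$-th roots costs only the harmless exponent $1/\max(1,t)$ in the final contradiction. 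This is a small but genuine refinement; apart from it, the two arguments coincide.
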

\begin{proof}[ of (i) implies (ii).]
We pick an extremal sequence in $\mathcal R(P)$ (i.e., for $a_k$ as in \eqref{extremalsequence}) $r_k:=\frac{p_{l_k}}{q_{m_k}}$, where $\degree p_{l_k}=l_k\leq k$ and $\degree q_{m_k}=m_k\leq k.$ 

We notice that
$$r_k=p_{l_k}e^{\left(m_k U^{\sigma_{m_k}} \right)}=p_{l_k}e^{\left(k U^{\frac{m_k}{k}\sigma_{m_k}} \right)}=: p_{l_k}e^{\left(k U^{\hat\sigma_{k}} \right)}\,,\;\text{where}$$
$\sigma_k$ are as in the previous proof.
Notice that the sequence of measures $\{\hat\sigma_{k}\}:=\{\frac{m_k}{k}\sigma_{m_k}\}$ has the property $\int_P d\hat\sigma_k\leq \int_P d\sigma_{m_k}=1$ since $m_k/k\leq 1$.

By the local sequential compactness we can extract a subsequence (relabeling indeces)  converging to any weak$^*$ closure point $\sigma$ that necessarily is a Borel measure such that $\int_P d\sigma\leq 1.$ Notice that $\sigma$ can be also the zero measure: here is the main difference between this case and Proposition \ref{rationaltoweighted} where each weak$^*$ limit has the same positive mass. 

Notice that $U^{\hat\sigma_{k}}$ converges to $U^{\sigma}$ uniformly on $K$ as in the previous proof, hence for any $\epsilon >0$ we can pick $k_\epsilon$ such that for any $k>k_\epsilon$ we have
$$ U^{\hat\sigma_{k}}-\epsilon\leq U^{\sigma}\leq U^{\hat\sigma_{k}}+\epsilon.$$
Therefore, seetting $w:=U^\sigma$ we have
\nalign{r_k e^{-k\epsilon}=p_{l_k}e^{\left(k U^{\hat\sigma_{k}}\right)}e^{(-k \epsilon)} \leq p_{l_k}e^{\left(k U^{\sigma} \right)}=p_{l_k}w^{k}\\ \leq p_{l_k}e^{\left(k U^{\hat\sigma_{k}}\right)}e^{(k \epsilon)}=r_k e^{k\epsilon}.}
The result follows by the same lines as in proof of Proposition \ref{rationaltoweighted}, using the weighted Bernstein Markov property of $[K,\mu,w]$ $\forall w\in\mathcal W(P).$ \qed
\end{proof}
\begin{proof}[of (ii) implies (i).]
Pick $\sigma$ such that $U^\sigma\in \mathcal W(P).$ If $\sigma=0$ we notice that the rational Bernstein Markov property is stronger than the usual Bernstein Markov property.

If $\sigma$ is not the zero measure we set $c:=\int_P d\sigma$, $\hat\sigma=\sigma/ c\in \mathcal M_1^+(P)$, and we pick a sequence of natural numbers $0\leq m_k\leq k$ such that $\lim_k m_k/k=c.$ We find $\sigma_k\in \mathcal M_1^+(P)$, $\sigma_k:=(1/m_k)\sum_{j=1}^{m_k}\delta_{z_j^{(m_k)}}$ such that $\sigma_k\to^* \hat\sigma$ as in the previous proof, thus $\frac{m_k}{k}\sigma_k\to^*\sigma.$

It follows that 
\begin{equation}
m_kU^{\sigma_k}+k\epsilon=k(\frac{m_k}{k} U^{\sigma_k}-\epsilon)\leq kU^{\sigma}\leq k(\frac{m_k}{k} U^{\sigma_k}-\epsilon)=m_kU^{\sigma_k}-k\epsilon,\label{uniformestimate2}
\end{equation}
for $k$ large enough.

We can work by contradiction supposing that $[K,\mu,U^{\sigma}]$ does not satisfy the weighted Bernstein Markov property and following the same lines of the proof of (\emph{ii}) implies (\emph{i}) of the previous proposition using \eqref{uniformestimate2} instead of \eqref{uniformestimate}. 
 \qed
\end{proof}

\begin{remark}
The combination of the two previous propositions proves in particular that if $(K,\mu,P)$ has the sub-diagonal rational Bernstein Markov property and $(K,\mu)$ has the Bernstein Markov property, it follows that $(K,\mu,P)$ has the rational Bernstein Markov property.

On the other hand if $(K,\mu,P)$ has the sub-diagonal rational Bernstein Markov property but not the rational Bernstein Markov property, it follows that $(K,\mu)$ does not satisfy the Bernstein Markov property.
\end{remark} 

According to Proposition \ref{rationaltoweighted2}, our original question boils down 
to \emph{whether the Bernstein Markov property implies the weighted Bernstein Markov property for any weight in the class $\mathcal W(P)$}. In the next theorem we give two possible sufficient conditions for that, corresponding to two different situations that are rather extremal in a sense. The reader is invited to compare them with  situation of Example 1(a) and 1(b).

We denote by $S_K$ the \emph{Shilov boundary} of $K$ with respect to the uniform algebra $\mathcal P(K)$ of functions that are uniform limits on $K$ of entire functions (or equivalently polynomials). We recall that $S_K$ is defined as the smallest closed subset $B$ of $K$ such that $\max_{z\in K}|f(z)|=\max_{z\in B}|f(z)|$ for all $f\in \mathcal P(K).$

We use the standard notation for the \emph{polynomial hull} of a compact set $K$, that is
\begin{equation}
\label{polyhull}
\hat K:=\{z\in \C:\, |p(z)|\leq \|p\|_K\,,\,\forall p\in \wp\},
\end{equation}  
where $\wp:=\cup_{k\in \N}\wp^k.$
\begin{theorem}\label{mainresultsec2}
 Let $K\subset\C$ be a compact non polar set and $\mu\in \M^+(K)$ be such that 
$\support \mu=K$ and $(K,\mu)$ has the Bernstein Markov Property. For a 
compact set $P\subset \C$ such that $K\cap P=\emptyset$, suppose that one of the following 
occurs.
\begin{flalign*}
&\text{Case a:}&S_K=K.&&\\ 
&\text{Case b:}&\hat K\cap P=\emptyset.&& 
\end{flalign*}
Then the triple $[K,\mu,w]$ has the weighted Bernstein Markov Property with respect to
any weight $w\in \mathcal W(P)$ and thus $(K,\mu,P)$ has the rational 
Bernstein Markov Property. 
\end{theorem}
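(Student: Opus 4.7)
The plan is to reduce the theorem, via Proposition \ref{rationaltoweighted2}, to showing the weighted Bernstein--Markov property $[K,\mu,w]$ for every $w=e^{U^\sigma}\in\mathcal{W}(P)$. The entire argument then rests on a single \emph{approximation lemma}: for every such $w$ and every $\epsilon>0$ there exists a polynomial $Q_\epsilon$ with
\[
(1-\epsilon)\,w(z)\leq |Q_\epsilon(z)|\leq (1+\epsilon)\,w(z)\qquad\forall z\in K.
\]

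Granted the lemma, the deduction is mechanical. For $p_k\in\wp^k$ one gets pointwise on $K$ that $|p_kw^k|\leq (1-\epsilon)^{-k}|p_kQ_\epsilon^k|$ and $|p_kw^k|\geq (1+\epsilon)^{-k}|p_kQ_\epsilon^k|$, hence (using $\support\mu\subseteq K$)
\[
\frac{\|p_kw^k\|_K}{\|p_kw^k\|_{L^2_\mu}}\leq\Bigl(\frac{1+\epsilon}{1-\epsilon}\Bigr)^{\!k}\frac{\|p_kQ_\epsilon^k\|_K}{\|p_kQ_\epsilon^k\|_{L^2_\mu}}.
\]
Since $p_kQ_\epsilon^k\in\wp^{k(1+\deg Q_\epsilon)}$ with $\deg Q_\epsilon$ fixed as $k\to\infty$, the polynomial Bernstein--Markov assumption on $(K,\mu)$ yields $\limsup_k(\|p_kQ_\epsilon^k\|_K/\|p_kQ_\epsilon^k\|_{L^2_\mu})^{1/k}\leq 1$, so $\limsup_k(\|p_kw^k\|_K/\|p_kw^k\|_{L^2_\mu})^{1/k}\leq(1+\epsilon)/(1-\epsilon)$; letting $\epsilon\downarrow 0$ completes the weighted BMP.

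The two hypotheses enter only in the construction of $Q_\epsilon$. In \emph{Case b}, since $\hat K$ is polynomially convex (so $\C\setminus\hat K$ is connected) and disjoint from $P$, I can cut $\hat{\C}$ along a connected closed set containing $P\cup\{\infty\}$ that avoids $\hat K$, producing a simply connected open neighborhood $\Omega$ of $\hat K$ inside $\C\setminus P$. On this simply connected $\Omega$, the harmonic potential $U^\sigma$ admits a single-valued holomorphic primitive $F$ with $\Re F=U^\sigma$; the function $h:=e^F$ is then holomorphic and nonvanishing on $\Omega$ with $|h|=w$, and Runge's theorem on the polynomially convex $\hat K$ furnishes polynomials $Q_n\to h$ uniformly on $\hat K$, giving the approximation on $K$.

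In \emph{Case a}, the condition $S_K=K$ translates to $K=\partial\hat K$. I would solve the Dirichlet problem on $\interior\hat K$ with boundary data $U^\sigma|_K$ (continuous because $K\cap P=\emptyset$), obtaining a harmonic extension $\tilde U$. Polynomial convexity of $\hat K$ forces every connected component of $\interior\hat K$ to be simply connected, so $\tilde U$ admits a single-valued harmonic conjugate $\tilde V$ on each component, and $h:=\exp(\tilde U+i\tilde V)$ is a nonvanishing element of $\mathcal{A}(\hat K)$ with $|h|=e^{\tilde U}=w$ on $K=\partial\hat K$. Mergelyan's theorem (applicable because $\hat K$ is polynomially convex) then provides polynomials converging uniformly to $h$ on $\hat K$, hence to $w$ on $K$.

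The principal difficulty is in Case a, where one must know that both $\tilde U$ and its harmonic conjugate $\tilde V$ extend continuously up to $\partial\hat K=K$; this is a boundary-regularity issue for the Dirichlet problem on $\interior\hat K$. The boundary datum $U^\sigma|_K$ is real-analytic (since $K\cap P=\emptyset$) and the combination of $\support\mu=K$ with the polynomial Bernstein--Markov property of $(K,\mu)$ should force enough regularity of $K$ to make the construction go through; a careful treatment of this point is where the work lies. Case b is, by comparison, an essentially direct application of Runge.
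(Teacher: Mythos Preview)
Your overall strategy coincides with the paper's: reduce via Proposition~\ref{rationaltoweighted2} to the weighted BMP, and establish the latter through a multiplicative polynomial approximation $(1-\epsilon)\,|Q_\epsilon|\leq w\leq (1+\epsilon)\,|Q_\epsilon|$ on $K$, then feed $p_kQ_\epsilon^k\in\wp^{k(1+\deg Q_\epsilon)}$ into the polynomial BMP of $(K,\mu)$. The deduction from the approximation lemma is identical to the paper's.

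In \emph{Case b} your argument is essentially the paper's. One imprecision: when $\hat K$ is disconnected there is no single simply connected neighbourhood $\Omega\supset\hat K$; removing a connected closed set $C\supset P\cup\{\infty\}$ from $\C_\infty$ gives an open set whose \emph{components} are each simply connected, and one defines $h=e^F$ componentwise before applying Runge on the polynomially convex $\hat K$. The paper treats the disconnected situation explicitly, invoking the Hilbert Lemniscate Theorem to enclose $\hat K$ in finitely many simply connected lemniscate components disjoint from $P$, and then Mergelyan rather than Runge. Either route works once this point is repaired.

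In \emph{Case a} there is a genuine gap, and the paper's route is different and much shorter. The paper does \emph{not} solve a Dirichlet problem on $\interior\hat K$; it quotes directly the density result \cite[Lemma~3.2.4]{StaTo92}: when $S_K=K$, the set $\{|g|:g\in\wp\}$ is dense in the cone of positive continuous functions on $K$. This furnishes $Q_\epsilon$ immediately, with no regularity hypothesis on $\partial\hat K$ whatsoever. Your proposed construction, by contrast, needs both the Perron solution $\tilde U$ and its harmonic conjugate $\tilde V$ to extend continuously to $\partial\hat K$, which fails on irregular boundaries; and your hope that the polynomial BMP together with $\support\mu=K$ forces enough boundary regularity is not supported by the hypotheses --- the theorem assumes only that $K$ is non-polar, and BMP alone does not imply regularity of $K$. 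So as written Case~a is incomplete; the clean fix is to invoke the Stahl--Totik density lemma rather than attempt the harmonic extension.
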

\begin{proof}
Let us pick $\sigma\in \M^+(P)$ and set $w=\exp U^\sigma,$ also we pick a sequence $\{p_k\}$, 
where $p_k\in \wp^k.$
We show that in both cases $[K,\mu,w]$ has the weighted Bernstein Markov Property with respect to any weight $w\in \mathcal W_1(P),$ the rest following by Proposition \ref{rationaltoweighted2}.

\textbf{Case a.} We first recall (see \cite[Lemma 3.2.4 pg. 70]{StaTo92}) that the set 
$\{|g|:g\in \wp\}$ is dense in the cone of positive continuous functions on $S_K,$ which $w$ belongs to. 

For any $\epsilon>0$ we can pick $g_\epsilon\in \wp^{m_\epsilon}$ such that
\begin{equation}
 (1-\epsilon)|g_\epsilon|\leq w\leq (1+\epsilon)|g_\epsilon|.\label{twosideapprox}
\end{equation}
Notice that $|g_\epsilon|^k=|g_\epsilon^k|=|\tau_{\epsilon,k}|,$ where $\tau_{\epsilon,k}\in \wp^{m_\epsilon k}.$

If for any $p_k\in \wp^k$ we set $\tilde p_k:=\tau_{\epsilon,k}p_k\in \wp^{(m_\epsilon+1)k}$, then we have
\nalign{
 \|p_k w^k\|_K\leq & (1+\epsilon)^k\|\tau_{\epsilon,k}p_k\|_K=\|\tilde p_k\|_K,\\
 \|p_k w^k\|_{L^2_\mu}\geq & (1-\epsilon)^k\|\tau_{\epsilon,k}p_k\|_{L^2_\mu}=\|\tilde 
p_k\|_{L^2_\mu},\text{ and thus}\\
 \left(\frac{\|p_k w^k\|_K}{\|p_k 
w^k\|_{L^2_\mu}}\right)^{1/k}\leq&\frac{1+\epsilon}{1-\epsilon} 
\left[\left(\frac{\|\tilde p_k\|_K}{\|\tilde 
p_k\|_{L^2_\mu}}\right)^{\frac 1{(m_\epsilon+1)k}}\right]^{m_\epsilon+1}.\label{caseaestimate}
}
Using the polynomial Bernstein Markov property of $(K,\mu)$ and the arbitrariness of $\epsilon>0$ we can conclude 
that $\limsup_k  \left(\frac{\|p_k w^k\|_K}{\|p_k 
w^k\|_{L^2_\mu}}\right)^{1/k}\leq 1.$

\textbf{Case b.} Suppose first that $\hat K$ is connected, then it follows that  there exists an open neighbourhood $D$ of $\hat K$ which is a simply connected domain and $P\cap D=\emptyset.$ We recall that any harmonic function on a simply connected domain is the real part of a holomorphic one. Hence, being $U^\sigma$ harmonic on $D,$  we can pick $f$ holomorphic on $D$ such that 
\begin{equation}
w=\exp U^\sigma=\exp \Re f=|\exp f|.\label{harmonicexponential}
\end{equation}

Since $g:=\exp f$ is an holomorphic function on $D$, by Runge Theorem, we can uniformly approximate it by polynomials $g_\epsilon$ on $\hat K:=\{z\in \C, |p(z)|\leq \|p\|_K\,\forall p\in \mathscr P(\C)\}$. Now we can conclude the proof by the same argument \eqref{caseaestimate} and \eqref{harmonicexponential} of the \emph{Case a} above.

If otherwise $\hat K$ is not known to be connected, we apply the following version of the Hilbert Lemniscate Theorem \cite[Th. 16.5.6]{Hi}, given any open neighbourhood $U$ of $\hat K$ not intersecting $P$ we can pick a polynomial $s\in \wp$ such that $|s(z)|> \|s\|_{\hat K}=\|s\|_K$ for any $z\in \C\setminus U.$   

It follows that, picking a suitable positive $\delta$, the set $E:=\{|s|\leq\|s\|_K+\delta\}$ is a closed neighbourhood of $\hat K$ not intersecting $P.$

Notice that the set $E$ has at most $\degree s$ connected components $E_j$ and by definition it is polynomially convex. Moreover the Maximum Modulus Theorem implies that each $D_j:=\interior E_j$ is simply connected or the disjoint union of a finite number of simply connected domains that we do not relabel.

For any $j=1,2,\dots,\degree s$ we set $w_j:=w|_{D_j}$. We can find holomorphic functions $f_j$ and $g_j$ on $D_j$, continuous up to its boundary, such that $w_j=|\exp f_j|=|g_j|.$

Now notice that the function $g(z)=g_j(z)$ $\forall z\in D_j$ is holomorphic on $D$ and continuous on $E$, since $D$ is the disjoint union of the sets $D_j$'s. Hence we can apply the Mergelyan Theorem to find for any $\epsilon>0$ a  polynomial $g_\epsilon$ such that 
$$(1-\epsilon)|g_\epsilon(z)|\leq w(z)\leq (1+\epsilon)|g_\epsilon(z)|\;\;\forall z\in E\supseteq K. $$
We are back to the \emph{Case a} and the proof can be concluded by the same lines.\qed
\end{proof}

\section{A sufficient mass-density condition for the rational Bernstein Markov property}

In the case of $K=\support \mu$ being a regular set for the Dirichlet problem, the Bernstein Markov Property for $(K,\mu)$  is equivalent (cfr. \cite[Th. 3.4]{B97}) to $\mu\in \Reg$. A positive Borel measure is in the class $\Reg$ or \emph{has regular n-th root asymptotic behaviour} if for any sequence of polynomials $\{p_k\}$ one has
\begin{equation}
\limsup_k\left(\frac{|p_k(z)|}{\|p_k\|_{L^2_\mu}}\right)^{1/\degree p_k}\leq 1\; \text{ for } z\in K\setminus N,\;N\subset K, N\text{ is polar.}\label{regdef}
\end{equation}
However, the definition can be given in terms of other equivalent conditions, see \cite[Th. 3.1.1, Def. 3.1.2]{StaTo92}. We recall for the reader's convenience that a set $P$ is \emph{polar} if it is locally representable as a subset of the $\{-\infty\}$ level set of a subharmonic function.

Moreover in \cite[Th. 4.2.3]{StaTo92} it has been proven that any Borel compactly supported finite measure having regular support $K\subset \C$ and enjoying a \emph{mass density condition} ($\Lambda^*$-criterion \cite[pag. 132]{StaTo92}) is in the class $\Reg$, consequently $(K,\mu)$ has the Bernstein Markov property. In order to fulfil such $\Lambda^*$ condition a measure needs (roughly speaking) to be thick in a measure-theoretic sense on a subset of its support which has full logarithmic capacity (see equation \eqref{massdensity} below for the rigorous statement).

Notice that, even if this $\Lambda^*$ criterion is not known to be necessary for the Bernstein Markov property, in \cite{StaTo92} authors show that the criterion has a kind of sharpness property and no counterexamples to the conjecture of $\Lambda^*$ being necessary for the Bernstein Markov property are known. 
 Moreover, this mass density sufficient condition has been extended (here the logarithmic capacity has been substituted by the relative Monge-Ampere capacity with respect to a ball containing the set $K$) to the case of several complex variables by Bloom and Levenberg \cite{BlLe99}.

Here we observe that under the hypothesis of Theorem \ref{mainresultsec2} this condition turns out to be sufficient for the rational Bernstein Markov property as well; we state this in Theorem \ref{PCcase} then we generalize this result in Theorem \ref{NPCcase}.

We recall the definition of the \emph{logarithmic capacity} $\capa(\cdot)$ of a compact subset of the complex plane
\begin{equation}\label{logcap}
\capa(K):=\sup_{\mu\in \mathcal M^+_1(K)}\exp\left( -I[\mu] \right),
\end{equation}
where we denote by 
\begin{equation}
I[\mu]:=\int U^\mu d\mu =\int\int \log\frac 1 
{|z-\z|}d\mu(z)d\mu(\z)\label{logenergy} 
\end{equation}
the logarithmic energy of the measure $\mu.$

The existence of a minimizers for $I[\cdot]$ holds true provided $K$ is a non polar set \cite[Part I]{SaTo97} while the uniqueness follows by the strict convexity of $I[\cdot]$. The unique minimizer is named \emph{equilibrium measure} or \emph{extremal measure} and is denoted by $\mu_K$. It is a fundamental result that, for non-polar $K$,
\begin{equation}\label{equilibriummeasure}
\mu_K=\Delta g_K(z,\infty)\;\;\text{ and }\;\;g_K(z,\infty)=\int\log|z-\z|d\mu_K(\z)-\log\capa(K),
\end{equation}
where $g_K(z,\infty)$ is the Green function for the unbounded component $\Omega_K$ of $\C\setminus K$ with logarithmic pole at $\infty.$ Here the Laplacian has to be intended in the sense of distributions and has been normalized to get a probability measure.

\begin{theorem}\label{PCcase}
Let $K\subset\C$ be a compact regular set and $P\subset \Omega_K$ be compact. Let $\mu\in \M^+(K)$, $\support \mu=K$ and suppose that there exists $t>0$ such that
\begin{equation}\label{massdensity}
\lim_{r\to 0^+}\capa\left(\{z\in K:\mu(B(z,r))\geq r^t\} 
\right)=\capa(K).
\end{equation}
Then $(K,\mu,P)$ has the rational Bernstein Markov Property.  
\end{theorem}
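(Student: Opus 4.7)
The plan is to reduce directly to Theorem~\ref{mainresultsec2}, Case b, by verifying its two hypotheses under the assumptions of the present statement.

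First, I would observe that the mass-density hypothesis \eqref{massdensity} is precisely the classical $\Lambda^*$ criterion of Stahl--Totik \cite[Th.~4.2.3]{StaTo92}, which yields $\mu\in\Reg$. Since $K$ is assumed regular for the Dirichlet problem, the equivalence \cite[Th.~3.4]{B97} recalled at the beginning of this section gives that $(K,\mu)$ enjoys the polynomial Bernstein Markov property.

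Next, I would verify the geometric condition $\hat K\cap P=\emptyset$ required in Case b. For a compact planar set $K$, the polynomial hull $\hat K$ is obtained by adding to $K$ the bounded components of $\C\setminus K$, so that $\hat K=\C\setminus\Omega_K$; since the hypothesis places $P\subset\Omega_K$, the disjointness $\hat K\cap P=\emptyset$ is immediate. Combined with $\support\mu=K$, already present in the statement, Theorem~\ref{mainresultsec2} (Case b) then yields the rational Bernstein Markov property of $(K,\mu,P)$.

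This approach has essentially no technical obstacle: the present theorem is the synthesis of Theorem~\ref{mainresultsec2} with the classical Stahl--Totik sufficient condition, together with the elementary identification $\hat K=\C\setminus\Omega_K$. The only point worth double-checking is that the normalization and quantifiers in \eqref{massdensity} match the statement of the $\Lambda^*$ criterion in \cite[\S~4.2]{StaTo92} up to a harmless change of notation, so that its conclusion $\mu\in\Reg$ can indeed be invoked.
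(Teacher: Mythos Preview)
Your proposal is correct and is essentially identical to the paper's first proof (Proof \textsc{i}): invoke \cite[Th.~4.2.3]{StaTo92} to get the polynomial Bernstein Markov property from \eqref{massdensity} and the regularity of $K$, note that $P\subset\Omega_K$ gives $\hat K\cap P=\emptyset$, and apply Theorem~\ref{mainresultsec2}, Case~b. The paper also supplies a second, independent proof (Subsection~\ref{anotherproof}) that bypasses Theorem~\ref{mainresultsec2} entirely, arguing directly via a rational Bernstein--Walsh inequality \eqref{blatt} together with the Green-function convergence result of Theorem~\ref{approximationtheorem}; this alternative route is what motivates Section~4, but your argument fully suffices for the theorem as stated.
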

\begin{proof}[\textsc i]
By \cite[Th. 4.2.3]{StaTo92} it follows that $(K,\mu)$ has the Bernstein Markov property, by Theorem \ref{mainresultsec2} \emph{Case b} we can conclude that the rational Bernstein Markov property holds for $(K,\mu,P)$ for any $P\subset\Omega_K$ as well.\qed
\end{proof}
\begin{proof}[\textsc {ii}] 
See subsection \ref{anotherproof}.
\end{proof}

If we remove the hypothesis $P\subset \Omega_K$, then Theorem \ref{mainresultsec2} is no more applicable. We go around such a difficulty in the case $K\subset \Omega_P$ by a suitable conformal mapping $f$ of a neighbourhood of $K\cup P$ given by the Proposition \ref{techlemma} below.    

We recall, for the reader's convenience, the definitions of \emph{Fekete points} and \emph{transfinite diameter}. Given any compact set $K$ in the complex plane, for any positive integer $k$, a set of Fekete points of order $k$ is an array $\bs{z_k}=\{z_0,\dots,z_k\}\in K^k$ that maximizes the product of distances of its points among all such arrays, that is
\begin{equation*}
V_k(\bs{z_k}):=\prod_{1\leq i<j\leq k}|z_i-z_j|=\max_{\bs{\z}\in K^k} \prod_{1\leq i<j\leq k}|\z_i-\z_j|.
\end{equation*}
Notice that such maximizing array does not need to be unique.

It turns out that, denoting by $\delta_k(K):=\left(\max_{\bs{\z}\in K^k} V_k(\bs{\z})\right)^{\frac 2{k(k+1)}}$ the $k$-th diameter of $K,$ we have
\begin{equation}\label{pottheo}
\lim_k \delta_k(K)=:\delta(K)=\capa(K),
\end{equation}
where $\delta(K)$ is the \emph{transfinite diameter} of $K$ (existence of the limit being part of the statement). We refer the reader to \cite{Rans,SaTo97,Sa10} for further details. 

Recall that we indicate by $\hat E$ the polynomial hull of the set $E$, see \eqref{polyhull}.
\begin{proposition}\label{techlemma}
 Let $K,P\subset\C$ be compact sets, where $K\cap \hat P=\emptyset$.
Then there exist $w_1,w_2,\dots,w_m\in \C\setminus (K\cup \hat P)$ and $R_2>R_1>0$ such that denoting by $f$ the function 
$z\mapsto\frac 1 {\prod_{j=1}^m(z-w_j)}$ we have
 \begin{align*}
 K&\subset\subset \{|f|<R_1\},\\
 P&\subset\subset \{R_1<|f|<R_2\}.
\end{align*}
\end{proposition}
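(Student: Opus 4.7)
The plan is to wrap $\hat P$ in a polynomially convex lemniscate $L$ that strictly contains $\hat P$ in its interior and is disjoint from $K$, and then take Fekete points of $L$ of sufficiently high order $N$ as the $w_j$'s. The polynomial $F_N(z):=\prod_{j=1}^N(z-w_j)$ will then separate $K$ from $P$ at exponential rate in $N$, while its zeros sit on $\partial L$ by the maximum modulus principle, hence outside both $K$ and $\hat P$ as required.

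Concretely, I would first pick an open neighborhood $U$ of $\hat P$ with $\bar U$ compact and $\bar U\cap K=\emptyset$. The Hilbert Lemniscate Theorem, in the form used in the proof of Theorem~\ref{mainresultsec2} Case b, supplies a polynomial $s$ and a level $\rho>0$ with $\hat P\subset\{|s|<\rho\}\subset\{|s|\le\rho\}\subset U$ (a small upward perturbation of the level achieves strict interior containment). Set $L:=\{|s|\le\rho\}$; since $|s(z)|>\rho=\|s\|_L$ for every $z\notin L$, this $L$ is polynomially convex, so $\C\setminus L$ is connected, $K\subset\Omega_L$, and $L$ is regular for the Dirichlet problem (it is a union of analytic arcs). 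Let $w_1^{(N)},\dots,w_N^{(N)}$ be Fekete points of $L$ of order $N$ and $F_N(z):=\prod_j(z-w_j^{(N)})$. Fixing all but one coordinate, $V_N$ becomes the modulus of a polynomial in a single variable and hence is maximized on $\partial L$, forcing each $w_j^{(N)}\in\partial L$; in particular $w_j^{(N)}\notin\hat P\cup K$, which is exactly the location required by the statement.

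The asymptotics of $F_N$ then close the proof. By \eqref{pottheo} we have $\|F_N\|_L^{1/N}\to\capa(L)$, and the weak$^*$ convergence of the normalized Fekete counting measures to the equilibrium measure of $L$, combined with the continuity of $g_L(\cdot,\infty)$ up to $\partial L$, upgrades this to
$$\tfrac{1}{N}\log|F_N(z)|\longrightarrow \log\capa(L)+g_L(z,\infty)$$
uniformly on compact subsets of $\C\setminus L$. Setting $\alpha:=\min_{z\in K}g_L(z,\infty)>0$, one obtains, for small $\epsilon>0$ and $N$ large,
$$\max_{\hat P}|F_N|\le\capa(L)^N e^{N\epsilon},\qquad \min_K|F_N|\ge \capa(L)^N e^{N(\alpha-\epsilon)},$$
so any $\rho_N$ strictly between these values satisfies $\max_P|F_N|<\rho_N<\min_K|F_N|$. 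Since no $w_j^{(N)}$ lies on $\hat P$, the constant $c_N:=\min_{\hat P}|F_N|$ is strictly positive, so $\min_P|F_N|\ge c_N>0$. Taking $f:=1/F_N$, $R_1:=1/\rho_N$, $R_2:=2/c_N$ delivers the required inclusions $K\subset\subset\{|f|<R_1\}$ and $P\subset\subset\{R_1<|f|<R_2\}$.

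The main obstacle I anticipate is ensuring the $w_j$'s sit outside all of $\hat P$ (rather than merely outside $P$), which forbids the naive choice of Fekete points of $\hat P$ or $P$ directly; the remedy is to enlarge $\hat P$ to the polynomially convex lemniscate $L$ through the Hilbert Lemniscate Theorem, forcing Fekete points to $\partial L$ and therefore off $\hat P$. A secondary technicality is that $L$ may have several connected components when $\deg s>1$, but polynomial convexity of $L$ keeps $\C\setminus L$ connected, so a single Green function $g_L(\cdot,\infty)$ governs the asymptotics of $F_N$ uniformly on $K$.
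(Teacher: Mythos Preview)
Your argument is correct and shares the paper's overall architecture: enlarge $\hat P$ to a regular compact set disjoint from $K$, take Fekete points of that enlargement as the $w_j$, and use the root asymptotics of the Fekete polynomial $F_N$ to separate $K$ from $P$. The difference lies in the choice of enlargement. The paper uses the metric thickening $\hat P^\rho$ and then has to (a) perform a preliminary scaling to force $\log\delta(P)<\min_K g_P(\cdot,\infty)$, (b) run a double limit in $\rho\to 0$ and $m\to\infty$ to obtain the separation condition $\log\delta_m(\hat P^\rho)<a(\rho)-\epsilon$, and (c) treat the polar case $\capa(P)=0$ separately. Your use of a Hilbert lemniscate $L$ as the enlargement short-circuits all three steps: $L$ is automatically regular and polynomially convex, so $\alpha=\min_K g_L(\cdot,\infty)>0$ is immediate, the separation $\max_{\hat P}|F_N|\le\|F_N\|_L\approx\capa(L)^N\ll\capa(L)^N e^{N\alpha}\approx\min_K|F_N|$ holds for a \emph{fixed} $L$ once $N$ is large, and no polar/non-polar dichotomy arises since $L$ always has interior. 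The price you pay is invoking the Hilbert Lemniscate Theorem (already used elsewhere in the paper), whereas the paper's thickening is entirely elementary; conversely your route yields a shorter and more transparent proof.
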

\begin{proof}We first suppose $P$ to be not polar.

Moreover we show that we can suppose without loss of generality that
\begin{equation}\label{capcondition}
 \log\delta( P)<\min_K g_{P}(\cdot,\infty).
\end{equation}
To do that, consider $0<\lambda<\frac 1{\delta( P)}$ and notice that
$$\log\delta(\lambda P)=\log \lambda\delta (P)<0.$$
On the other hand one has $g_{\lambda P}(z,\infty)=g_{P}(\frac z \lambda,\infty)$, thus it follows that
$$\min_{z\in K} g_P(z,\infty)=\min_{z\in \lambda K} g_{\lambda P}(z,\infty)>0>\log\delta(\lambda P),$$
where the first inequality is due to the assumption $K\cap \hat P=\emptyset.$

If we build $\tilde f$ as in the proposition for the sets $P':=\lambda P$ and $K':=\lambda K$, then $f:=\tilde f \circ \frac 1 \lambda$ enjoys the right properties for the original sets $P,K.$ Hence in the following we can suppose \eqref{capcondition} to hold.

Let us pick $0<\rho<\bar\rho:=d(\hat P,K)/2$, where $d(A,B):=\inf_{x\in A,y\in B}|x-y|$, and consider the set $\hat P^\rho$.

For the sake of an easier notation we denote by $g(z)$ and $g_\rho(z)$ the functions $g_{P}(z,\infty)$ and $g_{\hat P^\rho}(z,\infty)$.

For any $k\in \N$ let us pick any set $Z_k(\rho):=\{z_1^{(k)},\dots,z_{k}^{(k)}\}$ of Fekete points for $\hat P^\rho,$ moreover we denote the polynomial $\prod_{j=1}^k(z-z_j^{(k)})$ by $q_k.$ Notice that $Z_k(\rho)\subset (\partial \hat P^\rho)^k\subset(\C\setminus(K\cup P))^k$, hence $\{z_1^{(k)},\dots,z_{k}^{(k)}\}$ is an admissible tentative choice for $w_1,w_2,\dots,w_k.$

Let us set
\begin{align*}
a(\rho):=&\min_K g_\rho,\\
a:=&\min_{\rho\in[0,\bar\rho]}a(\rho)=a(\bar\rho),\\
b:=&\max_{\rho\in [0,\bar\rho]}\max_K g_\rho=\max_Kg.
\end{align*}

We recall that (see \cite[III Th. 1.8]{SaTo97})
$$\lim_k \frac{1}{k}\log^+|q_k|=g_\rho,\;\text{ locally uniformly on } \C\setminus \hat P^\rho.$$ 
Thus for any $\epsilon>0$ we can choose $m(\epsilon)\in \N$ such that
$$\left\|\frac{1}{m}\log^+|q_m|-g\right\|_{B(\rho)}< \epsilon\;\;\forall m\geq m(\epsilon),$$
where $B(\rho):=\{z\in \C:a\leq g_\rho(z)\leq b\},$ notice that $\hat P^\rho\cap B(\rho)=\emptyset.$

Then, taking $\epsilon<a$ we have $\forall m\geq m(\epsilon)$
\begin{equation}\label{Dr}\begin{split}
K\subset \left\{a(\rho)-\epsilon\leq\frac{1}{m}\log^+|q_m|\leq b+\epsilon 
\right\}=\\
\left\{ e^{m(a(\rho)-\epsilon)} \leq|q_m| \leq e^{m(b+\epsilon)} \right\}=:A(\epsilon,\rho,m).
\end{split}\end{equation}
On the other hand, exploiting the extremal property of Fekete polynomials \cite[Th. 5.5.4 (b)]{Rans}, we have $\|q_m\|_{\hat P^\rho}\leq \delta_m(\hat P^\rho)^m,$
where $\delta_m(E)$ is the $m$-th order diameter of $E.$ In other words
$$P\subset \left\{|q_m|\leq \delta_m(\hat P^\rho)^m \right\}=:D(\rho,m).$$

In order to prove that $A(\epsilon,\rho,m)\cap D(\rho,m)=\emptyset,$ for suitable $\epsilon >0$, $\dist(K,\hat P)>\rho>0$ and $m>m(\epsilon)$, we need to show that for such values of parameters 
\begin{equation}\label{mcondition}
\log\delta_m(\hat P^\rho)<a(\rho)-\epsilon.
\end{equation}

In such a case the function $f(z):=\frac{1}{q_m(z)}$ satisfies the properties of the proposition since
$$\|f\|_K\leq e^{(-m(a(\rho)-\epsilon))}<\delta_m(\hat P^\rho)^{-m}\leq \min_{P}|f|.$$ 

To conclude, we are left to prove that we can choose admissible $m,$ $\rho>0$ and $\epsilon>0$  such that \eqref{mcondition} holds. To do that we recall that, since $P=\cap_{l\in \N} P_{\frac 1 l}$ , by \cite[Th. 5.1.3]{Rans} we have
$$\delta(P)=\lim_l \delta(P_\frac 1 l)=\lim_l\lim_m\delta_m(P_\frac 1 l).$$
By the same reason $g_{1/m}$ is uniformly converging by the Dini's Lemma to $g$ on a neighbourhood of $K$ not intersecting $P_{\bar\rho}$.

Therefore, it follows by \eqref{Dr} and \eqref{capcondition} that possibly shrinking $\epsilon$ to get $$0<\epsilon<\min\{a,\min_K g -\log\delta(P)\}\;\; \text{ we have}$$
$$\lim_l\lim_m\log\delta_m(P_\frac 1 l)=\log\delta(P)\,<\,\min_K g-\epsilon=\lim_m \min_K g_{1/m}-\epsilon.$$
Hence (possibly taking $\epsilon'<\epsilon$) there exists a increasing subsequence $k\mapsto l_k$ with 
$$\lim_m\log\delta_{m}(P_{1/l_k})< \lim_m\min_K g_{1/m}-\epsilon' \text{ for any }\,k\in \N.$$
In the same way we can pick a subsequence $k\to m_k$ such that $\log\delta_{m_k}(P_{1/l_k})< \min_K g_{1/m_k}-\epsilon''$ for all $k\in \N$. Taking $k$ large enough to get $m_k>m(\epsilon'')$ and setting $m:=m_k$, $\rho:=1/l_k$  suffices.

In the case of $P$ being a polar subset of $\C$ we observe that for any positive $\rho$ the set $\hat P^\rho$ is not polar since it contains at least one disk. Moreover notice that $\lim_m\delta_m(P_{1/m})=\log\delta(P)=-\infty$ whereas the sequence of harmonic (on a fixed suitable neighbourhood of $K$) functions $g_{1/m}$ is positive and increasing. Equation \eqref{Dr} is then satisfied for $m$ large enough. The rest of the proof is identical.\qed 
\end{proof}

We use the standard notation $f_*\mu(A):=\int_{f^{-1}(A)}d\mu$ for any Borel set $A\subset \C$.

If we use Proposition \ref{techlemma} and set $E:=f(K),\,Q:=f(P)$ we can see that $\widehat{E}\cap Q=\emptyset$ thus $E,Q$ are precisely in the same relative position as in the Theorem \ref{PCcase}. Therefore we are now ready to state a sufficient condition for the rational Bernstein Markov property under more general hypothesis, where we do not assume $\hat K\cap P=\emptyset$.

\begin{theorem}[Mass-Density Sufficient Condition]\label{NPCcase}
 Let $K,P\subset \C$ be compact disjoint sets where $K$ is regular with respect to the Dirichlet problem and $\hat P\cap K=\emptyset$. Let $\mu\in \M^+(K)$ be such that $\support 
\mu=K$ and suppose that there exist $t>0$ and $f$ as in Proposition \ref{techlemma} 
such that the following holds
\begin{equation}
\lim_{r\to 0^+}\capa\left(\{z\in f(K):f_*\mu(B(z,r))\geq r^t\} 
\right)=\capa(f(K)).\label{massdensity2}
\end{equation}
Then $(K,\mu,P)$ has the rational Bernstein Markov Property.
\end{theorem}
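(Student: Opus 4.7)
The plan is to reduce to Theorem \ref{PCcase} by means of the rational change of variables $f$ supplied by Proposition \ref{techlemma}. First I would apply that proposition to get $f(z)=1/\prod_{j=1}^m(z-w_j)$ and $R_2>R_1>0$ such that $K\subset\subset\{|f|<R_1\}$ and $P\subset\subset\{R_1<|f|<R_2\}$, and set $E:=f(K)$, $Q:=f(P)$. The maximum principle applied to the polynomial $p(w)=w$ gives $\hat E\subset\{|w|\leq\|p\|_E\}\subset\{|w|<R_1\}$, while $Q\subset\{|w|>R_1\}$; hence $\hat E\cap Q=\emptyset$ (equivalently $Q\subset\Omega_E$), placing $(E,Q)$ in the configuration required by Theorem \ref{PCcase}.

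Next I would verify the hypotheses of Theorem \ref{PCcase} for $(E, f_*\mu, Q)$: the pushforward $f_*\mu\in\M^+(E)$ has full support $E$ by the continuity and surjectivity of $f:K\to E$, and $E$ inherits regularity for the Dirichlet problem from $K$ since $f$ is a nonconstant holomorphic map off its poles. The mass-density hypothesis \eqref{massdensity2} is precisely \eqref{massdensity} for the triple $(E, f_*\mu, Q)$, so Theorem \ref{PCcase} yields the rational Bernstein Markov property of $(E, f_*\mu, Q)$; by Proposition \ref{rationaltoweighted2} this is equivalent to the weighted Bernstein Markov property of $[E, f_*\mu, \tilde w]$ for every $\tilde w\in\mathcal W(Q)$.

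The final step is to transfer these properties back to $(K,\mu,P)$. I would use Proposition \ref{rationaltoweighted2} in the opposite direction: it suffices to show that $[K,\mu,w]$ has the weighted Bernstein Markov property for every $w=e^{U^\sigma}\in\mathcal W(P)$. Combining the identities $\|g\|_E=\|g\circ f\|_K$ and $\|g\|_{L^2_{f_*\mu}}=\|g\circ f\|_{L^2_\mu}$ with the factorization $f(z)-f(\xi)=-(z-\xi)H(z,\xi)/(Q(z)Q(\xi))$, where $Q(z)=\prod(z-w_j)$ and $H(z,\xi)=(Q(z)-Q(\xi))/(z-\xi)$, and integrating the resulting logarithmic identity against $\sigma\in\M^+(P)$, one obtains
\begin{equation*}
U^\sigma(z)=U^{f_*\sigma}(f(z))-\sigma(P)\log|Q(z)|-U^\nu(z)-c_\sigma,
\end{equation*}
where $\nu$ is a positive measure supported on the ``other'' $f$-preimages of $f(P)$ (all lying in $\{|f|>R_1\}$, hence disjoint from $K$) and $c_\sigma$ is a constant. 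This decomposes $w$ as the product of $\tilde w(f(\cdot))$, with $\tilde w=e^{U^{f_*\sigma}}\in\mathcal W(Q)$, and a correction factor whose singularities lie outside $K$.

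The chief obstacle is to make this transfer quantitative at the $k$-th-root level, since the correction $|Q|^{-\sigma(P)}e^{-U^\nu}$ depends nontrivially on $z$ and a naive sup$/L^2$ comparison produces an irreducible factor of type $(M/m)^k$. I would resolve this by combining the weighted Bernstein Markov property for $[E, f_*\mu, \tilde w]$ with Mergelyan-type approximation of the correction factor on a polynomially convex neighborhood of $K$ disjoint from both the poles $w_j$ and the ``other'' preimages, along the scheme of Theorem \ref{mainresultsec2} Case b. This yields the weighted Bernstein Markov property of $[K,\mu,w]$ for every $w\in\mathcal W(P)$, and a final application of Proposition \ref{rationaltoweighted2} delivers the rational Bernstein Markov property for $(K,\mu,P)$.
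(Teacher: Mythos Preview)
Your first two paragraphs coincide with the paper: build $f$ via Proposition \ref{techlemma}, set $E=f(K)$, $Q=f(P)$, check $\hat E\cap Q=\emptyset$, and apply Theorem \ref{PCcase} to obtain the rational Bernstein Markov property of $(E,f_*\mu,Q)$. The divergence is entirely in the transfer back to $(K,\mu,P)$.

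The paper's transfer is direct and occupies three lines: given $\{r_k\}\in\mathcal R(P)$, one sets $\tilde r_j:=r_{\lfloor j/m\rfloor}\circ f$, notes that this yields an element of $\mathcal R(Q)$, and uses the change-of-variables identities $\|r_k\|_K=\|\tilde r_{mk}\|_E$ and $\|\tilde r_{mk}\|_{L^2_{f_*\mu}}\le\|r_k\|_{L^2_\mu}$. The rational BMP of $(E,f_*\mu,Q)$ then gives $\|r_k\|_K\le c_{mk}\|r_k\|_{L^2_\mu}$ with $c_{mk}^{1/(mk)}\to 1$, and taking $k$-th roots finishes the proof. No passage through Proposition \ref{rationaltoweighted2}, no potential-theoretic decomposition, no Mergelyan.

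Your route through the weighted characterization is a genuine detour, and it stalls precisely at the point you identify as the ``chief obstacle''. The correction factor $|Q|^{-\sigma(P)}e^{-U^\nu}$ has its singularities at the points $w_j$ and at the ``other'' $f$-preimages of $f(P)$, all of which lie in $\{|f|>R_1\}=\{|Q|<1/R_1\}$. You propose to approximate it by $|g_\epsilon|$ with $g_\epsilon$ polynomial on a polynomially convex neighbourhood of $K$ disjoint from those singularities. But nothing in Proposition \ref{techlemma} prevents some $w_j$ (or some of the other preimages) from lying in $\hat K\setminus K$; indeed $\{|Q|>1/R_1\}$ is the exterior of a lemniscate and is not polynomially convex, so $K\subset\{|Q|>1/R_1\}$ does not force $\hat K\subset\{|Q|>1/R_1\}$. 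In that case every polynomially convex neighbourhood of $K$ contains a singularity of the correction factor and the Mergelyan/Runge step cannot be carried out. This is exactly the obstruction that necessitated the hypotheses of Theorem \ref{mainresultsec2} (Case a or Case b), and here you are assuming neither.

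In short: the setup matches the paper, but you have replaced a short composition argument by a longer weighted-BMP argument whose last step is not justified and appears to fail when $\hat K$ meets the set $\{|Q|<1/R_1\}$.
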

\begin{proof}By Theorem \ref{PCcase} it follows that the triple $(E,f_*\mu,Q)$ has the 
rational Bernstein Markov Property. 

To conclude the proof it is sufficient to notice that for any sequence $\{r_k\}$ in 
$\mathcal R(P)$, the sequence $\{\tilde r_j\}$ defined by
\begin{equation*}
 \tilde r_j:=r_{\lfloor j/m\rfloor}\circ f\;\;j=1,2,\dots
\end{equation*}
is an element of $\mathcal R(Q).$
Moreover by the rational Bernstein Markov property of $(E,f_*\mu,Q)$ we can pick $c_j>0$ such that $\limsup_j c_j^{1/j}\leq 1$
and 
$$\|r_k\|_K=\|\tilde r_{mk}\|_E\leq c_{mk}\|\tilde r_{mk}\|_{L^2(f_*\mu)}\leq 
c_{mk}\|r_k\|_{L^2(\mu)}.$$
Thus we have
$$\left(\frac{\|r_k\|_K}{\|r_k\|_{L^2(\mu)}}\right)^{1/k}\leq 
\left(c_{mk}^{1/(mk)}\right)^m\to 1^m=1.$$\qed
\end{proof}
We can also state the above result in a simpler way, thought not completely equivalent.
\begin{corollary}\label{easierstatement}
Let $K,P\subset \C$ be compact sets where $K$ is regular with respect to the Dirichlet problem and $\hat P\cap K=\emptyset$. Let $\mu\in \M^+(K)$ be such that $\support 
\mu=K$ and suppose that there exist $t>0$ and $f$ as in Proposition \ref{techlemma} such that the following holds
\begin{equation}
\lim_{r\to 0^+}\capa\left(f\left(\{\z\in K:\mu(B(\z,r))\geq r^t\} 
\right)\right)=\capa(f(K)).\label{massdensity3}
\end{equation}
Then $(K,\mu,P)$ has the rational Bernstein Markov Property.
\end{corollary}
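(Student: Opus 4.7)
The plan is to reduce Corollary~\ref{easierstatement} to Theorem~\ref{NPCcase} by showing that the mass-density condition~\eqref{massdensity3} on $K$ implies condition~\eqref{massdensity2} on $f(K)$, possibly with a slightly enlarged exponent. The underlying geometric fact is that $f(z)=1/\prod_{j=1}^m(z-w_j)$ is holomorphic on a neighbourhood of $K$ (since the poles $w_j$ lie outside $K\cup\hat P$), hence $\mathcal C^1$ and therefore Lipschitz on the compact set $K$ with some constant $L>0$.

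First I would fix $t>0$ as in~\eqref{massdensity3} and set $A_r:=\{\z\in K:\mu(B(\z,r))\geq r^t\}$. For any $\z\in A_r$, the Lipschitz estimate gives $f(B(\z,r)\cap K)\subseteq B(f(\z),Lr)$, so $B(\z,r)\cap K\subseteq f^{-1}(B(f(\z),Lr))$. Using $\support\mu=K$ and the definition of the pushforward,
$$f_*\mu\bigl(B(f(\z),Lr)\bigr)=\mu\bigl(f^{-1}(B(f(\z),Lr))\bigr)\geq \mu(B(\z,r))\geq r^t.$$

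Next I would set $s:=Lr$ and fix any $t'>t$; since $L^{-t}s^t\geq s^{t'}$ for all sufficiently small $s>0$, the previous inequality yields
$$f(A_{s/L})\subseteq \bigl\{z\in f(K):f_*\mu(B(z,s))\geq s^{t'}\bigr\}\quad\text{for all small }s>0.$$
Monotonicity of logarithmic capacity, together with the hypothesis~\eqref{massdensity3}, then gives the sandwich
$$\capa(f(K))=\lim_{s\to 0^+}\capa\bigl(f(A_{s/L})\bigr)\leq \liminf_{s\to 0^+}\capa\bigl(\{z\in f(K):f_*\mu(B(z,s))\geq s^{t'}\}\bigr)\leq\capa(f(K)),$$
which is exactly condition~\eqref{massdensity2} with exponent $t'$. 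Theorem~\ref{NPCcase} then applies and delivers the rational Bernstein Markov Property for $(K,\mu,P)$.

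The only delicate point I anticipate is correctly handling the direction of the pushforward inequality (images versus preimages of balls) and absorbing the Lipschitz constant $L$ into a harmless increase of the mass-density exponent from $t$ to $t'$; once $f$ is recognised as Lipschitz on $K$, the reduction is a routine capacity sandwich.
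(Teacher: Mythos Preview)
Your proposal is correct and follows essentially the same route as the paper: use that $f$ is Lipschitz on $K$ to show that the image of the mass-density set $A_r$ is contained in the corresponding mass-density set for $f_*\mu$ (with a rescaled radius), absorb the Lipschitz constant $L$ by passing from exponent $t$ to a slightly larger exponent $t'>t$, and finish with a capacity sandwich to obtain~\eqref{massdensity2} and invoke Theorem~\ref{NPCcase}. Your bookkeeping of the exponent change is in fact cleaner than the paper's own presentation.
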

\begin{proof}
Let $L:=\Lip_K f=\inf\{L: |f(x)-f(y)|<L|x-y|, \text{ for all }x,y\in K\}$, we set
\begin{align*}
A_r&:=\{\z\in K:\, \mu(B(\z,r/L))\geq r^t\}\\
D_r&:=\{z\in f(K):\,f_*\mu(B(z,r))\geq r^t\}.
\end{align*}
We observe that if $\z_0\in A_r$ then $z_0:=f(\z_0)$ lies in $D_r$. For, notice that
$$f_*\mu(B(z_0,r))=\int_{f^{-1 }(B(z_0,r))}d\mu\geq\int_{B(\z_0,r/L)}d\mu$$
since $f(B(\z_0,r/L))\subseteq B(z_0,r).$ Therefore $f(A_r)\subseteq D_r.$

If we suppose that $\capa(f(A_r))\to\capa(f(K)),$ then it follows that $\capa(D_r)\to \capa(f(K))$ as well by the inequality $\capa(f(K))\geq \capa(D_r)\geq \capa(f(A_r))\to \capa(f(K)).$

Now consider the set $B_r:=\{\z\in K:\, \mu(B(\z,r))\geq r^{t'}\},$ for some $t'>t$, condition \eqref{massdensity3} says $\lim_{s\to 0^+}\capa(f(B_s))=\capa(f(K))$. Now take $s=r/L$ and notice that for small $r$ we have $\left(\frac r L\right)^{t'}\geq r^t$, thus by condition \eqref{massdensity3} it follows that $\lim_{r\to 0^+}\capa(f(A_r))=\capa(f(K)).$ By the previous argument condition \eqref{massdensity2} follows and Theorem \ref{NPCcase} applies.\qed
\end{proof}

\begin{example} We go back to the case of the Example 1 (e) to show that the same conclusion follows by applying Corollary 1. Let us recall the notation. We consider the annulus $A:=\{z:1/2\leq |z|\leq 1\}$, set $K:=\partial A$, $P:=\{0\}$ and $\mu:=1/2 ds|_{\partial \mathbb D}+1/2 ds|_{\frac 1 2\partial \mathbb D}$, where $ds$ is the standard arc length measure.

We proceed as in Proposition \ref{techlemma} to build the map $f$: we take $\rho=0.1$ and for each $m\in \N$ we pick a set of Fekete points for $P^\rho=\{|z|\leq 0.1\}$.

In this easy example $m=2$ suffices to our aim, so we can choose $w_1=0.1$, $w_2=-0.1$, $f(z)=\frac 1{(z-w_1)(z-w_2)}=\frac{1}{z^2-0.01}.$

We notice that $f$ is a holomorphic map of a neighbourhood $K^\delta$ of $K$ and we can compute its Lipschitz constant $\Lip_{K}(f):=\inf\{L>0:|f(x)-f(y)|\leq L|x-y|,\forall x\neq y\in K\}
$ as follows.
$$L_\delta:=\Lip_{K^\delta}(f)=\| f'\|_{K^\delta}=\max_{z\in K^\delta}\left|\frac{-2z}{(z^2-0.01)^2}  \right| .$$
For instance, taking $\delta= 0.1$ we get $L_\delta=\frac{4(1-2\delta)}{1-4\delta}=5.\overline 3.$

For any $\z\in \partial \mathbb D$ and $r<1/2$ we have
\nalignstar{
&\mu\left (B(\z,r)\right)=\frac 1 2 \int_{B(\z,r)\cap \partial \mathbb D}ds=\frac 1 2\int_{\arg(\z)-\arcsin\left(r\right)}^{\arg(\z) +\arcsin\left(r\right)}1\,d\theta\\
&=\arcsin\left(r\right),
}
similarly for any $\z\in 1/2\partial \mathbb D$ we have
\nalignstar{
&\mu\left (B(\z,r)\right)=\frac 1 2 \int_{B(\z,r)\cap 1/2\partial \mathbb D}ds=\frac 1 2\int_{\arg \z-2\arcsin\left(r\right)}^{\arg \z +2\arcsin\left(r\right)}1\,\frac{d\theta}{2}\\
&=\arcsin(r).
}
Notice that taking $t=1$ and $r<1/2$ \eqref{massdensity3} is satisfied since $\{\z\in K:\mu(B(\z,r))\geq r\}=K$ for all $0<r<1/2.$

Finally we notice that also $(A,\mu,P)$ has the rational Bernstein Markov property (as we observed in Example 1 (e)) since any rational function having poles on $P$ achieves the maximum of its modulus on $K$.
\end{example}
It is worth to notice that a measure $\mu$ can satisfy \eqref{massdensity3} even if the mass of balls of radius $r$ decays very fast (e.g. faster than any power of $r$) as $r\to 0$ at some points of the support of $\mu$. This is the case of the following example.

\begin{example}
Let us consider the measure $\mu$, where 
$$\frac{d\mu}{d\theta}:=\exp\left(\frac{-1}{1-\left(\frac{\theta}{\pi}\right)^2}\right)\;,\;-\pi\leq \theta\leq \pi$$
defined on the unit circle $\partial\mathbb D$ and pick as pole set $P:=\{0\}.$

\begin{align}
&\mu(B(e^{i\theta},r))= \int_{\theta - 2\arcsin r/2}^{\theta +2\arcsin r/2} \exp\left( \frac{-\pi^2}{\pi^2-u^2}\right) du   \\
&\geq 
\begin{cases}
4\arcsin r/2 \exp\left( \frac{-\pi^2}{\pi^2-\left( \theta+2\arcsin r/2\right)^2} \right)&, 0\leq\theta<\pi-2\arcsin r/2\\
4\arcsin r/2\exp\left( \frac{-\pi^2}{\pi^2-(\theta - 2\arcsin r/2)^2} \right)&, -\pi+2\arcsin r/2\leq\theta\leq 0.
\end{cases}.\label{massestimate}
\end{align}
We try to test condition \eqref{massdensity3} using $t=1$ and the map $f(z):=\frac{1}{z-0.01}$ which is a bi-holomorphism of a neighbourhood of $\partial \mathbb D$. Therefore the condition $\lim_{r\to 0^+}\capa(f(K_r))=\capa(f(K))$ of Corollary 1 for sets $K_r\subseteq K$ is equivalent to $\lim_{r\to 0^+}\capa K_r=\capa K$ and we are reduce to test the simpler condition
\begin{equation}
\lim_{r\to 0^+}\capa\left(\{z\in \partial \mathbb  D:\mu(B(z,r))\geq r\}\right)=:\lim_{r\to 0^+}\capa K_r=\capa(\partial \mathbb D).\label{simpler}
\end{equation}
It is not difficult to see by \eqref{massestimate} that 
\begin{align*}
&K_r\supset\\
&\left\{e^{i\theta}: \theta\in[0,\pi-2\arcsin r/2[\,, \exp\left( \frac{-\pi^2}{\pi^2-\left( \theta+2\arcsin r/2\right)^2}\right)\geq \frac{r}{4\arcsin r/2}\right\}\bigcup\\
&\;\;\left\{e^{i\theta}: \theta\in]-\pi+2\arcsin r/2,0]\,, \exp\left( \frac{-\pi^2}{\pi^2-\left( \theta-2\arcsin r/2\right)^2}\right)\geq \frac{r}{4\arcsin r/2}\right\}=\\
&K_r^1\cup K_r^2=:\tilde K_r,
\end{align*}
where
\begin{align*}
K_r^i&=\{e^{i\theta},\theta\in[a_i,b_i]\}\\
a_1&=\max\left\{0 ,2\arcsin r/2-\pi\sqrt{1+\frac{1}{\log\frac{4\arcsin r/2}{2}}}\right\}\\
b_1&=\min\left\{\pi-2\arcsin r/2,\pi\sqrt{1-\frac 1{\log{\frac{2\arcsin r}{r}}}}-\arcsin r \right\}\\
a_2&=\min\left\{-\pi+2\arcsin r/2,\pi\sqrt{1-\frac 1{\log{\frac{2\arcsin r}{r}}}}-\arcsin r \right\}  \\
b_2&=\max\left\{0 ,-2\arcsin r/2+\pi\sqrt{1+\frac{1}{\log\frac{4\arcsin r/2}{2}}}\right\}.
\end{align*}
It is not difficult to see that for $r\to 0^+$ we have $[a_1,b_1]=[0,\pi-2\arcsin r/2]$, $[a_2,b_2]=[-\pi+2\arcsin r/2,0]$, hence $K_r\supseteq \{e^{i\theta},\theta\in [-\pi+2\arcsin r/2,\pi-2\arcsin r/2]$

We recall that the logarithmic capacity of an arc of circle of radius $1$ and length $\alpha$ is $\sin(\alpha/4)$; see \cite[pg. 135]{Rans}. Therefore we have
\nalign{
&\capa(\partial \mathbb D)\geq\lim_{r\to 0^+}\capa (K_r)\geq \lim_{r\to 0^+}\capa (\tilde K_r)=\\
&\lim_{r\to 0^+}\sin\left(\frac{2\pi-4\arcsin r/2}{4}\right)=1=\capa(\partial \mathbb D),
}
this proves \eqref{simpler} and since we considered a bi-holomorphic map $f$ \eqref{massdensity3} follows. By Corollary \ref{easierstatement} we can conclude that $\{\partial \mathbb D, \mu,\{0\}\}$ has the rational Bernstein Markov property. 
\end{example}

\section{Convergence of Green functions and mass density condition}
The aim of this section is to relate the convergence of logarithmic capacities of compact subsets $K_j$ of a given compact regular set $K$ to the uniform convergence of the Green functions $g_{K_j}(z,a)$ to $g_K(z,a)$ with poles $a$ in a given compact set $P$ disjoint by $K$. We provide a one variable version (see Th. \ref{approximationtheorem} below) of \cite[Th. 1.2]{BlLe99} adapted to our setting of \emph{moving poles}.

Then we give, as an application, another proof of Theorem \ref{PCcase} using this convergence property.

We recall that given a proper sub-domain $D$ of the one point compactification $\C_\infty$ of $\C$ the \emph{Green function} of $D$ is the unique function $G_D:D\times D\rightarrow ]-\infty,\infty]$ such that $G_D(\cdot,\z)$ is harmonic in $D\setminus \{\z\}$ and bounded out from any neighbourhood of $\z$, $G_D(\cdot,\z)$ has a logarithmic pole at $\z$ and $\lim_{z\to z_0}G_D(z,w)=0$ for all $z_0\in \partial D\setminus N$ where $N$ is a \emph{polar} set.  

Let $K\subset\C$ be any compact set, then we can consider the standard splitting in connected components
$$\C\setminus K:=\;\Omega_K\;\bigcup\;\left(\cup_{j\in I}\Omega_j\right),$$
where $\Omega_j$'s are open bounded, while $\Omega_K$ is the only unbounded connected component of $\C\setminus K$. 

To simplify the notation from now on we denote by $g_K(z,\z)$ the Green function $G_{\Omega_K}(z,\z)$ of the unbounded \emph{domain} $\Omega_K,$ notice that the notation is consistent with the case when $\z=\infty.$

There exists another characterization of the Green function that allows also a generalization to several complex variables. Namely one considers the Lelong class $\mathscr L(\C)$ of all subharmonic functions on the complex plane having a logarithmic pole at $\infty,$ e.g.,  $u(z)-\log^+|z|$ is bounded on any neighbourhood of $\{\infty\}$. Then the \emph{extremal subharmonic function} is introduced
\begin{equation}\label{VK}
V_K(z):=\sup\{u\in \mathscr L(\C), u|_{K}\leq 0\}.
\end{equation}
The upper envelope defining $V_K$ has been proved to be equal to the logarithm of the Siciak-Zaharyuta function
$$\Phi_K:=\sup\{|p(z)|^{1/\degree(p)}, p\in \wp(\C),\|p\|_K\leq 1\}.$$ 
By these definitions it follows the Bernstein Walsh Inequality
\begin{equation}
\label{BWI}|p(z)|\leq \|p\|_K\exp(\degree(p) V_K(z))\;,\;p\in \wp(\C).
\end{equation}

Moreover, it turns out that the upper semi-continuous regularization 
$$V^*_K(z):=\limsup_{\z\to z}V_K(\z)$$
coincides with $g_K(z,\infty)$ for all non polar compact $K$; see \cite[Sec. 3]{levnotes}.

Lastly, we recall that a compact set $K$ is said to be \emph{regular} if $g_K(\cdot,\infty)$ (or equivalently $V_K^*$) is continuous on $K$ and hence on $\C.$

We will make repeated use of this classical result (see for instance \cite{Rans}) 
\begin{equation}\label{greenextremal}
g_{K}(z,a)=g_{\eta_a(K)}(\eta_a(z),\infty),
\end{equation}
where $ \eta_a(z)=\frac{1}{z-a}$.

From now on we use the following notation, given any compact set $K$ and a positive $\epsilon$ we set
$$K^\epsilon:=\{z:d(z,K)\leq \epsilon\},$$
where $d(z,K):=\min_{\z\in K}|z-\z|$ is the standard euclidean distance.
\begin{theorem}\label{approximationtheorem}
 Let $K\subset\C$ be a regular compact set and $P$ a compact subset of $\Omega_K$.
Then there exists an open bounded set $D$ such that $K\subset D$ and $P\cap \overline 
D=\emptyset,$ such that for any sequence $\{K_j\}$ of compact subsets of $K$ the 
following are equivalent.
\begin{align}
 &\lim_j \capa(K_j)=&\capa(K).& &\tag{\emph{i}}\label{capaconvergence}&&\\
 &\lim_j g_{K_j}(z,a)=&\,g_K(z,a)&\;\;\text{ loc. unif. for } z\in 
D\;,\;\text{unif. for } a\in P&\tag{\emph{ii}}&&\label{greenconvergence}.
\end{align}
\end{theorem}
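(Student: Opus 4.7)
The plan is to pick $D$ as a bounded open neighbourhood of $K$ with $\overline D \cap P = \emptyset$ (possible by the disjointness of the compact sets $K$ and $P$) and to exploit identity \eqref{greenextremal} to turn every finite pole $a \in P$ into the pole at infinity of the M\"obius image $\eta_a(K)$. This reduces the question to the classical one-variable pole-at-infinity case, with the additional task of controlling uniformity in $a$ over the compact parameter space $P$.

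For \eqref{capaconvergence}$\,\Rightarrow\,$\eqref{greenconvergence}, the pole at infinity is the standard case: weak-$*$ compactness of $\{\mu_{K_j}\} \subset \mathcal M^+_1(K)$, lower semicontinuity of the logarithmic energy, and uniqueness of the equilibrium measure give $\mu_{K_j} \wconverge{} \mu_K$, and uniform continuity of $(z,\zeta) \mapsto \log|z-\zeta|$ on compact subsets of $(\mathbb C \setminus K) \times K$, combined with $\log\capa(K_j) \to \log\capa(K)$, upgrades this to $g_{K_j}(\cdot,\infty) \to g_K(\cdot,\infty)$ locally uniformly on $\mathbb C \setminus K$. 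For a finite pole $a \in P$, identity \eqref{greenextremal} reduces matters to the analogous convergence at infinity for the M\"obius images $\eta_a K_j \subseteq \eta_a K$, uniformly in $a \in P$. Since pushforward by $\eta_a$ does not send $\mu_{K_j}$ to the equilibrium measure $\mu_{\eta_a K_j}$, this uniform transfer is the main technical content and would be handled by a compactness argument based on the joint continuity of $(z,a) \mapsto \eta_a z$ on $\overline D \times P$, routed through the weighted energies $-\log\capa(K_j) + 2\int \log|z-a|\,d\mu_{K_j}(z)$ and the variational characterisation of $\mu_{\eta_a K_j}$.

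For \eqref{greenconvergence}$\,\Rightarrow\,$\eqref{capaconvergence}, choose $D_0$ with $K \subset D_0 \Subset D$; then $\varphi_j(z,a) := g_{K_j}(z,a) - g_K(z,a)$ extends to a bounded harmonic function of $z$ on the unbounded component $U$ of $\mathbb C_\infty \setminus \overline{D_0}$ (the logarithmic poles at $a \in P$ cancel, and $K_j, K \subset \overline{D_0}$), and by \eqref{greenconvergence} it tends to $0$ uniformly on $\partial D_0 \times P$. The maximum principle on $U$ propagates this uniform convergence to $\overline U$, in particular $\varphi_j(\infty, a) \to 0$ uniformly in $a \in P$. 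By the symmetry $g_{K_j}(\infty, a) = g_{K_j}(a, \infty)$, this gives $g_{K_j}(a, \infty) \to g_K(a, \infty)$ uniformly in $a \in P$. Finally, the nonnegative harmonic function $u_j := g_{K_j}(\cdot, \infty) - g_K(\cdot, \infty)$ on $\Omega_K \cup \{\infty\}$ satisfies $u_j(\infty) = \log(\capa(K)/\capa(K_j))$ and $u_j(a) \to 0$ for every $a \in P$; Harnack's inequality applied to $u_j$ on the compact set $P \cup \{\infty\} \subset \Omega_K \cup \{\infty\}$ then gives $u_j(\infty) \leq C\, u_j(a) \to 0$, whence $\capa(K_j) \to \capa(K)$.

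The hard step will be proving the uniformity in $a \in P$ in \eqref{capaconvergence}$\,\Rightarrow\,$\eqref{greenconvergence}: the pushforward $(\eta_a)_*\mu_{K_j}$ is not the equilibrium measure of $\eta_a K_j$ but only an admissible competitor, so one must compare the weighted energies $I[(\eta_a)_*\mu_{K_j}]$ with the optimal $I[\mu_{\eta_a K_j}]$ uniformly in $a \in P$. By contrast, the reverse direction is cleaner: the maximum principle propagates the convergence from $\partial D_0$ to infinity, and Harnack's inequality supplies the quantitative link between values at finite poles $a \in P$ and the capacity of $K$.
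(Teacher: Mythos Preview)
Your argument for \eqref{greenconvergence}$\Rightarrow$\eqref{capaconvergence} is correct and takes a genuinely different, more elementary route than the paper. The paper fixes $a\in P$, transports via $\eta_a$, deduces $\capa(\eta_a K_j)\to\capa(\eta_a K)$ from the convergence of Green functions at $\infty$, and then invokes the non-trivial Proposition~\ref{addedprop} (relative extremal functions and relative capacities) to pull this back to $\capa(K_j)\to\capa(K)$. Your combination of the maximum principle on the unbounded component $U\subset\C_\infty$ (the logarithmic poles at $a$ cancel in $\varphi_j$, which is therefore bounded harmonic on $U$), the symmetry $g_{K_j}(\infty,a)=g_{K_j}(a,\infty)$, and Harnack's inequality for the nonnegative harmonic function $u_j$ on $\Omega_K\cup\{\infty\}$ avoids that machinery entirely and is arguably cleaner.

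For \eqref{capaconvergence}$\Rightarrow$\eqref{greenconvergence}, however, there is a genuine gap beyond the one you flag. Your potential-theoretic argument yields $g_{K_j}(\cdot,\infty)\to g_K(\cdot,\infty)$ only locally uniformly on $\C\setminus K$, because the kernel $(z,\zeta)\mapsto\log|z-\zeta|$ is singular on the diagonal and weak-$*$ convergence of $\mu_{K_j}$ does not control $U^{\mu_{K_j}}(z)$ for $z\in K$. But the theorem demands convergence locally uniformly on $D$, and $K\subset D$: for $z\in K$ you must show $g_{K_j}(z,\infty)\to 0$ uniformly, which is exactly where the regularity of $K$ enters. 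This step requires the Principle of Descent (lower semicontinuity of potentials along moving base points under weak-$*$ convergence) together with $g_K|_K\equiv 0$; the paper carries this out in \eqref{Equationalmostuniform} and the lines following, then upgrades pointwise to uniform via a Hartogs-type argument. Without this you cannot pass from $\C\setminus K$ to $D$, and the statement fails precisely on the set that matters for the application in Section~\ref{anotherproof}.

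Your plan for uniformity in $a\in P$ is also underdeveloped. The paper does not argue through the weighted energies you describe; instead it reduces to sequences: take any convergent $a_j\to\hat a$ in $P$, set $E_j:=\eta_{a_j}(K_j)$ and $E:=\eta_{\hat a}(K)$, and prove $\capa(E_j)\to\capa(E)$ by combining the Lipschitz estimate $\capa(\phi(L))\leq(\Lip_L\phi)\capa(L)$ applied to $\phi_j=\eta_{a_j}\circ\eta_{\hat a}^{-1}$ (whose Lipschitz constants tend to $1$) with Proposition~\ref{addedprop} for the fixed map $\eta_{\hat a}$. Your suggestion of comparing $I[(\eta_a)_*\mu_{K_j}]$ with $I[\mu_{\eta_a K_j}]$ produces only an \emph{upper} bound on $-\log\capa(\eta_a K_j)$; the matching lower bound, uniform in $a$, is not supplied, so this line is not yet a proof.
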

In order to prove Theorem \ref{approximationtheorem} we need the following proposition.
\begin{proposition}\label{addedprop}
Let $K\subset \C$ be a regular compact set and $\{K_j\}$ a sequence of compact subsets of $K$, let $D$ be a smooth bounded domain such that $K\subset D$ and $f:D\to \C$ a bi-holomorphism on its image.  Suppose that $\lim_j \capa(K_j)=\capa(K).$ Then
\begin{enumerate}[i)]
\item $g_{K_j}(z,\infty)\to g_{K}(z,\infty)$ locally uniformly,
\item $g_{f(K_j)}(z,\infty)\to g_{f(K)}(z,\infty)$ locally uniformly and
\item $\lim_j\capa (f(K_j))=\capa (f(K)).$
\end{enumerate} 
\end{proposition}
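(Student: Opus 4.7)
The three claims are tightly linked. My strategy is to prove (i) first by classical potential theory, to prove (iii) via a continuous-kernel identity afforded by the bi-holomorphism $f$, and finally to derive (ii) from (i) applied to the images $f(K_j)\subset f(K)$.

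For (i), the hypothesis $\capa(K_j)\to \capa(K)$ combined with the lower semi-continuity of the logarithmic energy $I$ forces every weak$^*$ cluster point $\mu$ of the equilibrium measures $\mu_{K_j}\in \M^+_1(K)$ to satisfy $I(\mu)\le \liminf I(\mu_{K_j}) = -\log\capa(K) = I(\mu_K)$; uniqueness of the equilibrium measure (valid since $K$ is non-polar) forces $\mu = \mu_K$, so the whole sequence satisfies $\mu_{K_j}\wconverge{}\mu_K$. Writing $g_{K_j}(z,\infty) = -U^{\mu_{K_j}}(z) - \log\capa(K_j)$, the scalar term converges to $-\log\capa(K)$ and the potential term converges to $-U^{\mu_K}$ uniformly on compact subsets of $\C\setminus K$, since the kernel $\log|z-\zeta|$ is jointly continuous there. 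To extend to uniform convergence on a neighborhood of $K$, I would use regularity: $g_K$ is continuous on $\C$ and vanishes on $K$, and a maximum-principle argument applied to the non-negative harmonic function $g_{K_j}-g_K$ on $\Omega_K\cap D$, together with control of its boundary values on $\partial D$ (from the previous step) and at $\infty$ (from convergence of Robin constants), yields the result.

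For (iii), the key observation is that since $f$ is bi-holomorphic on $D\supset K$, the correction kernel
\[
\kappa(z,w):= \log\left|\frac{f(z)-f(w)}{z-w}\right|
\]
extends to a continuous function on $K\times K$ with diagonal value $\log|f'(z)|$. For any $\sigma\in \M^+_1(K)$ the identity $I(f_*\sigma)= I(\sigma) - \iint \kappa\,d\sigma\otimes d\sigma$ holds, and the continuity of $\kappa$ together with $\mu_{K_j}\otimes\mu_{K_j}\wconverge{}\mu_K\otimes\mu_K$ and $I(\mu_{K_j})\to I(\mu_K)$ yields $I(f_*\mu_{K_j})\to I(f_*\mu_K)$. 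Since $f_*\mu_{K_j}\in\M^+_1(f(K_j))$ is an admissible competitor for the equilibrium problem on $f(K_j)$, this gives the one-sided bound $\liminf\capa(f(K_j))\ge \exp(-I(f_*\mu_K))$. The matching upper bound combines monotonicity $\capa(f(K_j))\le\capa(f(K))$ with a symmetric recovery step on the $K$-side: pulling back $\mu_{f(K)}$ by $f^{-1}$ to a measure $\tilde\nu\in\M^+_1(K)$ supported on the outer boundary of $K$, and using the weak$^*$ convergence $\mu_{K_j}\to\mu_K$ to construct, by a balayage-type regularization with $\mu_{K_j}$ as reference, competitors $\tilde\nu_j\in\M^+_1(K_j)$ whose transported energies $I(f_*\tilde\nu_j)$ tend to $I(\mu_{f(K)})=-\log\capa(f(K))$.

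For (ii), once (iii) is in hand, the pair $(f(K),\{f(K_j)\})$ falls within the setting of (i) — $f(K)$ is regular, since regularity is preserved under bi-holomorphisms on a neighborhood, and $\capa(f(K_j))\to\capa(f(K))$ — so applying (i) delivers $g_{f(K_j)}(\cdot,\infty)\to g_{f(K)}(\cdot,\infty)$ locally uniformly. The main obstacle in this scheme is the recovery-sequence construction in (iii): one needs to translate capacity convergence of $K_j\subset K$ into approximability of $\mu_{f(K)}$ by measures on $f(K_j)$ in logarithmic energy, which is subtle because $\mu_{f(K)}$ is generically not the push-forward of $\mu_K$.
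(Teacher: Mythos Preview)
Your argument for (i) is essentially the paper's, but the maximum-principle step is not quite right as written: on $\Omega_K\cap D$ the boundary is $(\partial D\cap\overline{\Omega_K})\cup(\partial\Omega_K\cap\overline D)$, and on the second piece you have $g_K=0$ but no a priori control on $g_{K_j}$, so the maximum principle gives nothing. The paper instead uses the Principle of Descent: from $\mu_{K_j}\wconverge{}\mu_K$ one gets $\limsup_j(-U^{\mu_{K_j}}(z_j))\le -U^{\mu_K}(\hat z)$ along any $z_j\to\hat z$, which together with the monotonicity $g_{K_j}\ge g_K$ and the continuity of $g_K$ yields local uniform convergence everywhere, including across $K$. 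This is a fixable technicality.

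The real gap is the one you yourself flag in (iii). Your lower bound $\liminf_j\capa(f(K_j))\ge\exp(-I(f_*\mu_K))$ falls short of $\capa(f(K))$ because in general $f_*\mu_K\neq\mu_{f(K)}$, and the proposed recovery step---producing $\tilde\nu_j\in\M^+_1(K_j)$ with $I(f_*\tilde\nu_j)\to I(\mu_{f(K)})$ by ``balayage-type regularization''---is not carried out and is not a standard consequence of $\capa(K_j)\to\capa(K)$. Approximating an \emph{arbitrary} finite-energy measure on $K$ in energy by measures on the $K_j$ is a genuinely stronger statement than convergence of capacities, and you have not justified it.

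The paper avoids this difficulty entirely by reversing your order and passing through a conformally invariant intermediary. After (i), it introduces the relative extremal function $U^*_{K,D}$ and the relative capacity $\capa(K,D)=\int_K\Delta U^*_{K,D}$, shows $U^*_{K_j,D}\to U^*_{K,D}$ uniformly on $D$ (sandwiched between rescalings of $g_{K_j}$), and hence $\capa(K_j,D)\to\capa(K,D)$. Since $U^*_{f(K_j),f(D)}=U^*_{K_j,D}\circ f^{-1}$, the same convergence holds for the images, and an Alexander--Taylor bound turns relative-capacity convergence into uniform smallness of $g_{f(K_j)}$ on $f(K)$, hence (ii). Part (iii) then drops out of (ii) via $-\log\capa=g+U^\mu$ evaluated at a fixed point. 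The point is that the \emph{relative} objects are local and bi-holomorphically invariant, whereas the global equilibrium measure and logarithmic capacity are not; this is what lets the paper transfer the convergence to $f(K_j)$ without ever needing to approximate $\mu_{f(K)}$ from inside.
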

\begin{proof}
It follows by the hypothesis on convergence of capacities that $\mu_{K_j}\rightharpoonup^*\mu_K$, see for instance \cite[Proof of Th. 4.2.3]{StaTo92}.

Let us pick any sequence $\{z_j\}$ of complex numbers converging to $\hat z\in \C$, it follows by the Principle of Descent \cite[Th. 6.8]{SaTo97} that 
$$\limsup_j -U^{\mu_{K_j}}(z_j)\leq -U^{\mu_K}(\hat z).$$

On the other hand, due to regularity of $K$, the fact that $K_j\subset K$ for all $j$ and since by assumption the sequence $-\log \capa (K_j)$ does have limit, we have
\begin{align}
&g_{K}(\hat z,\infty)= \liminf_j g_{K}(z_j,\infty)\leq \liminf_j g_{K_j}(z_j,\infty)\leq \limsup_j g_{K_j}(z_j,\infty)\nonumber\\
=&\limsup_j -U^{\mu_{K_j}}(z_j)-\log \capa(K_j)=\limsup_j -U^{\mu_{K_j}}(z_j)-\log \capa(K)\nonumber\\
\leq& -U^{\mu_K}(\hat z)-\log\capa(K)=g_{K}(\hat z,\infty).\label{Equationalmostuniform}
\end{align}
Thus equality holds, moreover, since the sequence and the limit point are arbitrary we get $g_{K_j}(\cdot,\infty)\to g_{K}(\cdot,\infty)$ locally uniformly in $\C$. Indeed, we can pick any compact set $L\subset \C$ and any maximizing\footnote{Notice that $g_{K_j}(z,\infty)\leq g_K(z,\infty)$ at any $z\in \C$ and, by the continuity of $g_K(\cdot,\infty),$ the function $|g_{K_j}(z_j,\infty)-g_K(z_j,\infty)|=g_{K_j}(z_j,\infty)-g_K(z_j,\infty)$ is upper semi continuous, thus it achieves its maximum on $L$.} sequence $\{z_j\}$ of points in $L$ for $|g_{K_j}(z,\infty)-g_K(z,\infty)|$, i.e., $g_{K_j}(z_j,\infty)-g_K(z_j,\infty)=\max_{z\in L} g_{K_j}(z,\infty)-g_K(z,\infty)$, and notice that extracting a converging subsequence of $z_{j_k}\to\hat z\in L$ and relabelling indexes we have  
\begin{align*}
&\limsup_j \|g_{K_j}(z,\infty)-g_K(z,\infty)\|_L=\limsup_j |g_{K_j}(z_j,\infty)-g_K(z_j,\infty)|\\
\leq& \limsup_j |g_{K_j}(z_j,\infty)-g_K(\hat z,\infty)|+\limsup_j |g_{K}(z_j,\infty)-g_K(\hat z,\infty)|\\
=&\limsup_j |g_{K_j}(z_j,\infty)-g_K(\hat z,\infty)|=0.
\end{align*}
Here we used both the continuity of $g_K(\cdot,\infty)$ and \eqref{Equationalmostuniform}.

Now we introduce some tools that are classical in (pluri-)potential theory in several complex variables. The one variable counterparts of these notions are just normalizations by a negative scaling factor: this leads to consider $\sup$ in place of $\inf$ and superharmonic functions in place of subharmonic. We choose this setting because it is easier to provide a proof of the above statement in this notation; we refer the reader to \cite[Ch. II.5]{SaTo97} for the one variable definitions and properties.

We pick a domain $D$ containing $K$ and we define the relative extremal subharmonic function
\begin{equation}\label{EquationDefinitionRelativeExtremal}
U_{K,D}^*(z):=\limsup_{\z\to z}\sup\{u(\z)\in shm(D),u\leq 0,\, u|_K\leq -1\}.
\end{equation}
Here $shm(D)$ stands for the set of subharmonic functions on $D$.
This is a subharmonic function on $D$ whose distributional Laplacian is a positive measure supported on $K$, moreover $U_{K,D}^*\equiv -1$ q.e. on $K$ for an arbitrary compact set $K$ and $U_{K,D}^*\equiv -1$ for a regular compact set $K$; see \cite{BeTa82}. The reader is invited to compare this to the Green potential of the condenser $(K,\partial D)$ in \cite[Ch. II.5]{SaTo97}.

The function $U_{K,D}^*-1$ solves the following variational problem that defines the \emph{relative capacity} of $K$ in $D.$ 
\begin{equation}\label{EquationRelativecapacity}
\capa(K,D):=\sup\left\{\int_K \Delta u: u\in shm(D,[0,1])\right\},
\end{equation}
namely one has $\capa(K,D)=\int_K \Delta U_{K,D}^*=\int_K -U_{K,D}^* \Delta U_{K,D}^*.$ 

Now we show that $U_{K_j,D}^*\to U_{K,D}^*$ uniformly on $D.$

On one hand, by the definition \eqref{EquationDefinitionRelativeExtremal} above, we have
$$\frac{g_{K_j}(z,\infty)}{\|g_{K_j}(\cdot,\infty)\|_D}-1\leq  U_{K_j,D}^*(z)\;\forall z\in D.$$

On the other hand, by the estimate $g_{K_j}(z)\geq (\inf_{\partial D}g_{K_j})(U_{K_j,D}^*(z)+1)$ for all $z\in F$ (see \cite[Prop. 5.3.3]{Kli}), it follows that
$$  U_{K_j,D}^*(z)\leq \frac{g_{K_j}(z,\infty)}{\inf_{w\in \partial D}g_{K_j}(w,\infty)}-1\;,\;\;\forall z\in D.$$
Hence we have
$$\frac{g_{K_j}(z,\infty)}{\|g_{K_j}(\cdot,\infty)\|_D}-1\leq  U_{K_j,D}^*(z)\leq \frac{g_{K_j}(z,\infty)}{\inf_{w\in \partial D}g_{K_j}(w,\infty)}-1\;\forall z\in D.$$
Since we proved that $g_{K_j}(z,\infty)\to g_{K}(z,\infty)$ uniformly, we get $U_{K_j,D}^*\to U_{K,D}^*$ uniformly on $D.$

It follows by the above convergence that $\capa(K_j,D)\to \capa(K,D)$ as well; see the definition \eqref{EquationRelativecapacity} of relative capacity and lines below. To show that we simply pick $\phi\in \mathscr C^\infty_c(D,[0,1])$ such that $\phi\equiv 1$ in a neighbourhood of $K$ and we write
\begin{align*}
\capa(K,D)&=\int_{K}\Delta U_{K,D}^*= \int_{D}\phi\Delta U_{K,D}^*=\int_{D}\phi\Delta U_{K,D}^*\\
&=\lim_j \int_{D}\phi\Delta U_{K_j,D}^*=\lim_j\capa(K_j,D).
\end{align*}
Now we note that, given a biholomorphism $f$ of $D$ on the smooth domain $f(D)=\Omega\subset \C$ there is a one to one correspondence between functions in $\{u \in shm(D):u\leq 0,u|_G\leq -1\}$ and $\{v \in shm(\Omega):v\leq 0,v|_{f(G)}\leq -1\}$ for any compact set $G\subset D.$ For this reason, setting $F=f(K)$ and $F_j=f(K_j)$ , one has $U_{F_j,\Omega}^*\equiv U_{K_j,D}\circ f$ and $U_{F,\Omega}^*\equiv U_{K,D}\circ f.$ Therefore we have
$$U_{F_j,\Omega}^*\to U_{F,\Omega}^*\;\;\text{locally uniformly in }\Omega.
 \text{ and}$$
 $$\capa(F_j,\Omega)\to \capa(F,\Omega).$$

Let us recall that we can find a constant $A>0$ such that $\sup_{\Omega} g_{F_j}(z,\infty)\leq \frac{A}{\capa(F_j,\Omega)}$ for each subset of the compact set $F$; see \cite{AlTa84}. Thus we can pick $j_0$ such that, for $j\geq j_0$, we have $\sup_{\Omega} g_{F_j}(z,\infty)\leq \frac{2A}{\capa(F,\Omega)}=M.$

It follows by the definition of relative extremal function that we have
$$0\leq \frac{g_{F_j}(z,\infty)}{M}-1\leq U_{F_j,\Omega}^*(z),\forall j>j_0, \forall z\in \Omega.$$
But since the right hand side converges uniformly to $-1$ on $F$ we get that $g_{F_j}(z,\infty)\to 0$ uniformly on $F.$ Note that the same reasoning shows that in particular $g_F(z,\infty)\equiv 0$ on $F$, that is $F$ is regular.

In particular for any $\epsilon>0$ we can pick $j_\epsilon$ such that for any $j>j_\epsilon$ we have $g_{F_j}(z,\infty)-\epsilon \leq 0\equiv g_{F}(z,\infty)$ for any $z\in F.$

Now we recall that for any compact set $L\subset \C$ the Green function with logarithmic pole at $\infty$ can be expressed as the upper semi continuous regularization of the upper envelopes of all subharmonic functions on $\C$ in the Lelong class (e.g., locally bounded and having a logarithmic pole at infinity) that are bounded above by zero on $L$; see \eqref{VK} above, \cite[Sec. 3]{levnotes} and \cite{Rans}. Hence, we get  $g_{F_j}(z,\infty)-\epsilon \leq g_{F}(z,\infty), \forall j>j_\epsilon, z\in \C.$
Similarly, $g_{F}(z,\infty) \leq g_{F_j}(z,\infty), \forall j\in \N, z\in \C,$ since $F_j\subset F.$

Therefore we have
$$g_{F_j}(z,\infty)-\epsilon \leq g_{F}(z,\infty)  \leq g_{F_j}(z,\infty), \forall j>j_\epsilon, z\in \C.$$
That is $g_{F_j}(z,\infty)\to g_{F}(z,\infty)$ uniformly in $\C.$

It follows by this uniform convergence that $\mu_{F_j}\rightharpoonup^*\mu_{F}$ (note that $\mu_{F}=\Delta g_{F}(z,\infty)$ and the distributional Laplacian, by linearity,  is continuous under the local uniform convergence) and thus $U^{\mu_{F}}= \lim_j U^{\mu_{F_j}}$ uniformly on compact sets of $\C\setminus F$ (by the uniform continuity of the $\log$ kernel away from $0$), thus in particular $U^{\mu_{F}}(\hat z)= \lim_j U^{\mu_{F_j}}(\hat z)$ for any given $\hat z\in \C\setminus F.$

Now we have, for any $\hat z\in \C\setminus F$
\begin{align*}
&-\log\capa(F_j)\\
=&g_{F_j}(\hat z,\infty)+U^{\mu_{F_j}}(\hat z)\to g_{F}(\hat z,\infty) +U^{\mu_{F}}(\hat z)\\
=&-\log\capa(F).
\end{align*}
\qed
\end{proof}

\begin{proof}[of Theorem \ref{approximationtheorem}]
 By Hilbert Lemniscate Theorem for any $\epsilon<d(K,P):=\inf_{z\in K}d(z,P)$ we can pick a 
polynomial $q$ such that 
 $$\hat K\subset D:= \{|q|<\|q\|_K\}\subset \hat K^\epsilon\;,\,\hat K^\epsilon\cap P=\emptyset.$$ 
Let $D$ be fixed in such a way.

We introduce a more concise notation for the Green functions involved in the proof:
we denote by $g(z,a)$ the Green function with pole at $a$ for the set $\Omega_K$, we omit the pole when $a=\infty$, we add a subscript $j$ if $K$ is replaced by $K_j$ and a superscript $b$ if $K$ or $K_j$ are replaced by $\eta_b(K)$ or $\eta_b(K_j)$, where $\eta_b(z):=1/(z-b).$ In symbols
\begin{equation*}
\begin{array}{ccc}
g(z):=g_K(z,\infty)&,&g_j(z,a):=g_{K_j}(z,a),\\
g_j(z):=g_{K_j}(z,\infty)&,&g^b(z,a):=g_{\eta_b K}(z,a),\\
g(z,a):=g_{K}(z,a)&,&g_j^b(z,a):=g_{\eta_b K_j}(z,a).\\
\end{array}
\end{equation*}
Moreover we set $E_j:=\eta_{a_j}(K_j)$ and $E:=\eta_{\hat a}(K).$
\emph{Proof of \textbf{\eqref{capaconvergence}}} $\Rightarrow$ 
\emph{\textbf{\eqref{greenconvergence}}}.
In order to prove the local uniform convergence of $g_j(\cdot,a)$ to $g(\cdot,a)$, uniformly with respect to $a\in P$, we pick any converging sequence $P\ni a_j\to \hat a$, we set $\tilde D:=\eta_{\hat a}(D)$ and we prove
\begin{equation}\label{claim1}
g^{a_j}_j\to  g^a\;\text{ loc. unif. in } \tilde 
D.
\end{equation}
Finally we notice that $g_j(\cdot,a_j)=g^{a_j}_j\circ \eta_{a_j}^{-1}\to  g^a\circ \eta_{\hat a}^{-1}=g(\cdot,\hat a)\;\text{ loc. unif. in } D$ hence the result follows.

We proceed along the following steps:
\begin{align}
& \lim_j\capa(E_j)=\capa(E). \tag{S1}\label{capacity}\\
& \mu_{E_j}\rightharpoonup^*\mu_{E}. \tag{S2}\label{weakconvergence}\\
& \lim_jg_{E_j}(z,\infty)=g_E(z,\infty),\;\text{ loc. unif. in }\C.\tag{S3}\label{uniformgreen}
\end{align}

Here we used the standard notation (see \eqref{equilibriummeasure}) $\mu_E$ for the equilibrium measure of the compact non-polar set $E$.

To prove \eqref{capacity} we use \cite[Th. 5.3.1]{Rans} applied to the set of 
maps $\phi_j:=\eta_{a_j}\circ\eta_{\hat a}^{-1}$ and $\psi_j:=\phi_j^{-1}$ 
together with the assumption \eqref{capaconvergence}. Each map is bi-holomorphic on 
a neighbourhood of $\tilde D,$ moreover we have

\begin{equation}
\begin{split}
 \|\phi_j'\|_{\eta_{\hat a}(K)}&=\max_{\z\in \eta_{\hat a}(K) } 
\frac 1 {\left|1+(\hat a-a_j)\z\right|^2}\\
&\leq \max_K \frac{|z-\hat a|^2}{|z-a_j|^2}\leq 1+\frac{|\hat a-a_j|^2}{|\dist(K,P)|^2}=:L_j.\label{LipMap}
\end{split}
\end{equation}
\begin{equation}
\begin{split}
\|\psi_j'\|_{\eta_{\hat a_j}(K_j)}&=\left(\min_{\z\in \eta_{a_j}(K_j) } 
\left|1+(a_j-\hat a)\z\right| \right)^{-2}\\
&\leq \max_{K_j} \frac{|z-a_j|^2}{|z-\hat a|^2}\leq 1+\frac{|\hat a-a_j|^2}{|\dist(K,P)|^2}=L_j.\label{LipInverse}
\end{split}
\end{equation}
We denoted by $\dist(K,H):=\inf\{\epsilon>0: K^\epsilon\supseteq H\,,\,H^\epsilon\supseteq K\}$ the Hausdorff distance of $K$ and $H$. Notice that $L_j\to 1$ as $j\to \infty.$

We recall that $\capa(f(E))\leq \Lip_E(f)\capa(E),$ where $\Lip_E(f):=\inf\{L: |f(x)-f(y)|<L|x-y|\,\forall x,y\in E\}$ for any Lipschitz mapping $f:E\rightarrow \C;$ \cite{Rans}[Th. 5.3.1]. Therefore, due to \eqref{LipMap} and \eqref{LipInverse}, we have the following upper bounds.
\begin{align*}
 \capa(E_j)=&\capa(\phi_j( \eta_{\hat a }(K_j)))\leq 
L_j\capa(\eta_{\hat a }(K_j)),\\
\capa(\eta_{\hat a}(K_j))=&\capa(\eta_{\hat 
a}\circ\eta_{a_j}^{-1}(E_j))=\capa(\psi_j(E_j))\leq 
L_j\capa(E_j).
\end{align*}
Thus, using $\lim_j L_j=1$, we have
\begin{align}
\liminf_j \capa(E_j)\geq & \liminf_j \frac 1 {L_j}\capa(\eta_{\hat a }(K_j))= \liminf_j \capa(\eta_{\hat a }(K_j)),\label{liminf}\\
\limsup_j \capa(E_j)\leq & \limsup_jL_j \capa(\eta_{\hat a }(K_j))\leq \limsup_j \capa(\eta_{\hat a }(K_j)).\label{limsup}
\end{align}
Now we use Proposition \ref{addedprop} to get $\lim_j \capa(\eta_{\hat a }(K_j))=\capa(\eta_{\hat a}(K))$ and thus
$$\liminf_j \capa(E_j)\geq \capa(\eta_{\hat a }(K))\geq \limsup_j \capa(E_j),$$
hence all inequalities are equalities and $\lim_j \capa(E_j)=\capa(E);$ this concludes the proof of \eqref{capacity}.

The proof of \eqref{weakconvergence} is by the Direct Method of Calculus of Variation. More explicitly, let $\mu_j:=\mu_{E_j}$ be the sequence of equilibrium measures, i.e., the  minimizers of $I[\cdot]$ as defined in \eqref{logenergy} among the classes $\mu\in \mathcal M_1(E_j)$. From \eqref{capacity} it follows that $\liminf_jI[\mu_j]=I[\mu_E]$. Therefore, if $\mu$ is any weak$^*$ closure point of the sequence, by lower semi-continuity of $I$, we get $I[\mu]\leq I[\mu_E].$

Notice that without loss of generality we can assume $K_j$, and thus $E_j$, to be not polar, since $\capa(K_j)>0$ for $j$ large enough.

If $\support \mu\subseteq E$, by the strict convexity of the energy functional, we have that $\mu=\mu_{E}$ and the whole sequence is converging to $\mu_E$; see \cite[Part I, Th. 1.3]{SaTo97}. Then we are left to prove $\support \mu\subseteq E$, this follows by the uniform convergence of $\eta_{a_j}$ to $\eta_{\hat a}$ and by properties of weak$^*$ convergence of measures.

To this aim, we suppose by contradiction $ \support \mu\cap \left( \C\setminus E\right)\neq \emptyset$. It follows that there exists a Borel set $B\subset \C\setminus E$ with $\mu(B)>0.$ Since $\mu$ is Borel we can find a closed set $C\subset B$ still having positive measure. Being $\C$ a metric space and we can find an open neighbourhood $A$ of $C$ disjoint by $E$ with $\mu(A)>0$.

Due to the Portemanteau Theorem (see for instance \cite[Th. 2.1]{Bi99}) we have
$$0<\mu(A)\leq \liminf_j \mu_j(A).$$
Therefore $C\subseteq A\subset E_{j_m}$ for an increasing subsequence $j_m.$

By the uniform convergence $\eta_{a_{j_m}}\to \eta_{\hat a}$ it follows that $C\subseteq A\subseteq E$, a contradiction since we assumed $C\cap E=\emptyset.$
 
Let us prove \eqref{uniformgreen}.

First, we recall (see for instance \cite[pg. 53]{SaTo97}) that for any compact set $M\subset \C$ we have $g_M(z,\infty)=-\log \capa(M)-U^{\mu_M}(z)$. Hence it follows that
\begin{equation}
g_{E_j}(\z,\infty)=-\log\capa(E_j)-U^{\mu_j}(\z).\label{decompose}
\end{equation} 

Due to \eqref{weakconvergence} and by the Principle of Descent \cite[I.6, Th. 6.8]{SaTo97} for any $\z\in \C$ we have 
\begin{equation}
 \limsup_j -U^{\mu_j}(\z)\leq -U^{\mu_E}(\z).\label{descent}
\end{equation}
It follows by \eqref{capacity},\eqref{decompose} and \eqref{descent} that
\begin{equation*}
\limsup_j g_{E_j}(\z,\infty)\leq g_E(\z,\infty),\;\forall \z\in \C.
\end{equation*}
The sequence of subharmonic functions $\{ g_{E_j}(\z,\infty)\}$ is locally uniformly bounded above and non negative, therefore we can apply the Hartog's Lemma. For each $\epsilon>0$ there exists $j(\epsilon)\in \N$ such that
\begin{equation*}
\|g_{E_j}(\z,\infty)\|_E \leq \|g_E(\z,\infty)\|_E+\epsilon=\epsilon. 
\end{equation*} 
Here the last equality is due to the regularity of $K$ and thus of $E$ (e.g. $g_E(\z,\infty)\equiv 0$ $\forall\z \in E$).
Therefore we have  
\begin{equation}
g_{E_j}(\z,\infty)-\epsilon \leq g_E(\z,\infty)\;,\;\forall \z\in E.\label{upperboundonE} 
\end{equation} 

By the extremal property of the Green function (see \eqref{VK} and lines below) and the upper bound \eqref{upperboundonE} it follows that
\begin{equation}
g_{E_j}(\z,\infty)-\epsilon \leq g_E(\z,\infty)\;,\;\forall \z\in \C,j\geq j(\epsilon).\label{upperboundonC} 
\end{equation}

Since $g_E(\cdot,\infty)$ is continuous (hence uniformly continuous on a compact neighbourhood $M$ of $E$ containing all $E_j$) for any $\epsilon>0$ we can pick $\delta>0$ such that $g_E(\z,\infty)\leq \epsilon$ for any $\z\in E^\delta$.

Let us set $j'(\epsilon):=\min\{\bar j : E_j\subseteq E^\delta \forall j\geq \bar j\}$, notice that $j'(\epsilon)\in \N$ for any (sufficiently small) $\epsilon>0$ since 
$$E_j\subset\eta_{a_j}(K)\subseteq L_j \eta_{\hat a} (K)=L_j E\subseteq E^{(L_j-1)\|z\|_E},$$
where $L_j$ is defined in equations \eqref{LipMap} \eqref{LipInverse} and $L_j\to 1.$

It follows by this choice that 
$$\|g_{E}(\z,\infty)\|_{E_j}\leq \epsilon,\;\forall j\geq  j'(\epsilon).$$ Therefore, again by the extremal property of $g_{E_j}(\z,\infty)$, we have
\begin{equation}
 g_{E}(\z,\infty)-\epsilon\leq g_{E_j}(\z,\infty),\;\;\forall \z\in \C, j\geq j'(\epsilon).\label{lowerboundonC}
\end{equation}

Now simply observe that \eqref{lowerboundonC} and \eqref{upperboundonC} imply 
$$ g_{E}(\z,\infty)-\epsilon\leq g_{E_j}(\z,\infty)\leq g_{E}(\z,\infty)+\epsilon\;,\;\;\forall j\geq \max\{j(\epsilon),j'(\epsilon)\}.$$
Therefore $g_{E_j}(\cdot,\infty)$ converges locally uniformly to $g_{E}(\cdot,\infty).$

To conclude the proof of \eqref{capaconvergence} $\Rightarrow$ \eqref{greenconvergence} let us pick any compact subset $L$ of $D$.
\begin{align*}
&\|g_j^{a_j}-g^{\hat a}\|_L=\|g_{E_j}(\eta_{a_j}(z),\infty)-g_E(\eta_{\hat a}(z),\infty)  \|_L\leq\\
&\|g_{E_j}(\eta_{a_j}(z),\infty)-g_E(\eta_{a_j}(z),\infty)  \|_L +\|g_{E}(\eta_{a_j}(z),\infty)-g_E(\eta_{\hat a}(z),\infty)  \|_L\to 0
\end{align*}
Here we used the continuity of $g_E(z,\infty)$ and the local uniform convergence of $\eta_{a_j}$ to $\eta_{\hat a}.$ By the arbitrariness of the sequence of poles $\{a_j\}$ \eqref{greenconvergence} follows.
\qed
\emph{Proof of \textbf{\eqref{greenconvergence}}} $\Rightarrow$ 
\emph{\textbf{\eqref{capaconvergence}}.}
Fix any pole $a\in P$ and set $\eta_a(z):=\frac 1{z-a}$, $E:=\eta_a(K),$ $E_j:=\eta_a(K_j)$, by our assumption we have $g_j^a\to g^a$ locally uniformly in $\C$ thus
$$g_{E_j}(\cdot,\infty)\to g_{E}(\cdot,\infty),$$ 
uniformly on some neighbourhood $D$ of $E$  (where $\eta_a^{-1}$ is a biholomorphism on its image).

It follows that $\mu_{E_j}\rightharpoonup^*\mu_E.$ Let us pick a point $\hat z\in D\setminus E$, by uniform continuity of the $\log$ kernel away from $0$ we have $U^{\mu_{E_j}}(\hat z)\to U^{\mu_{E}}(\hat z)$. On the other hand $g_{E_j}(\hat z,\infty)\to g_{E}(\hat z,\infty),$ therefore we have
\begin{align*}
&\lim_j -\log\capa(E_j)=\lim_j \left(g_{E_j}(\hat z,\infty)+ U^{\mu_{E_j}}(\hat z)\right)\\
=& g_{E}(\hat z,\infty)+\lim_j U^{\mu_{E_j}}(\hat z)=-\log\capa(E),
\end{align*}
where existence of the limit is part of the statement  and follows by the existence  of the limits of the two terms of the sum.

We apply Proposition \ref{addedprop} with $f:=\eta_a^{-1}$ to get $-\log\capa(K_j)\to -\log\capa(K).$\qed
\end{proof}
\subsection{A direct proof of the mass density sufficient condition by convergence of Green functions}\label{anotherproof}
Theorem \ref{approximationtheorem} can be used to prove directly Theorem \ref{PCcase}, that is, for measures having regular compact support, the classical sufficient mass density condition in \cite{BlLe13} or $\Lambda^*$ condition \cite{StaTo92} implies a rational Bernstein Markov Property, provided $P\subset \Omega_K.$

\begin{proof}[Direct proof of Theorem \ref{PCcase}]
The proof follows the idea of \cite[Th. 4.2.3]{StaTo92}, except for the lack of the Bernstein Walsh Inequality \eqref{BWI} which is not available for rational functions.

In place of it we use the following variant due to Blatt \cite[eqn. 2.2]{Bl09} which holds for any rational function $r_k$ of the form $r_k(\z) =\frac{p_k(\z)}{q_k(\z)}=\frac{c_k\prod\limits_{j=0}^{m_k}(\z-z_j^{(k)})}{\prod\limits_{j=0}^{n_k}(\z-a_j^{(k)})}$. For $ \z\notin\{a_1,\dots,a_{n_k}\}$ we have 
\begin{equation}
 |r_k(\z)|\leq\|r_k\|_K\exp \left(\sum_{j=1}^{n_k} g_{K}(\z,a_j)+ (m_k-n_k)g_{K}(\z,\infty) 
\right).\label{blatt}
\end{equation}
 Thus in particular we have
 \begin{equation*}
 |r_k(\z)|\leq \|r_k\|_{K_j}\exp\left(n_k \max_{a\in P}g_{K_j}(\z,a)+(m_k-n_k)g_{K_j}(\z,\infty)\right)\;\forall \z\in \C\setminus P.
 \end{equation*}
Notice that, for any sequence $K_j\subset K$ such that $\capa K_j\to \capa K$, from Theorem \ref{approximationtheorem} it follows that 
$$\max_{a\in P}g_{K_j}(\z,a)\to \max_{a\in P}g_{K}(\z,a)\;\text{ locally uniformly in }\C\setminus P.$$
Moreover, it is well known that under the same condition we have
$$g_{K_j}(\z,\infty)\to g_{K}(\z,\infty)\;\text{ locally uniformly in }\C.$$

Pick any $\{r_k\}\in \mathcal R(P).$ By the regularity of $K$ and the compactness of $P$ for any $\epsilon>0$ there exists $\delta>0$ such that
\begin{align*}
 g_{K}(\z,a)&\leq \epsilon\;\;\forall \z: \dist(\z,K)\leq \delta,\;\forall a\in P\\
g_{K}(\z,\infty)&\leq \epsilon\;\;\;\;\;\forall \z: \dist( \z,K)\leq \delta .
\end{align*}
Let us pick $\epsilon>0$, it follows by \eqref{blatt} that there exists $\delta>0$ such that $\forall \z: \dist(\z,K)\leq \delta$ we have
\begin{equation}
|r_k(\z)|\leq \|r_k\|_K e^{\left(n_k \max_{a\in P}g_{K}(\z,a)+(m_k-n_k)g_{K}(\z,\infty) \right)}\leq e^{(k\epsilon)}\|r_k\|_K.\label{byregularity}
\end{equation}

By Theorem \ref{approximationtheorem} (possibly shrinking $\delta$) we have, for any $A\subset K,\text{ with }\capa(A)>\capa(K)-\delta$ and locally uniformly in $\C\setminus P,$ 
\begin{align}
  \max_{w\in P}g_{A}(\z,w) &\leq  \max_{w\in P}g_{K}(\z,w)+\epsilon\;,\\
g_{A}(\z,\infty)&\leq g_{K}(\z,\infty)+\epsilon\;.
  \label{bytheorem}
\end{align}
Using \eqref{byregularity} and \eqref{bytheorem} we have
\begin{equation}
 |r_k(\z)|\leq e^{(2\epsilon k)}\|r_k\|_{A}\;\;\forall \z\in K^\delta,\,\forall A\subset K \text{ with }\capa(A)>\capa(K)-\delta.
\end{equation}
Let $\z_0\in A$ be such that $\|r_k\|_A=|r_k(\z_0)|$, we show that a lower bound for 
$|r_k|$ holds in a ball centred at $\z_0.$ By the Cauchy Inequality we have 
$|r_k'(\z)|<\frac{\|r_k\|_{\overline{B(\z_0,s)}}}{s}\leq\frac{e^{(2\epsilon 
k)}\|r_k\|_{A}}{s}$, for any $|\z-\z_0|<s$, $s<\delta.$ Taking $s=\delta/2$ we can integrate such an estimates as follows $\forall z\in \overline{B(\z_0,\delta/2)}$
\begin{equation*}
 \|r_k\|_{A}=|r_k(\z_0)|=\left|r_k(z)+\int_{[z,\z_0]}r_k'(\z)d\z  \right|\leq 
|r_k(z)|+|z-\z_0|\frac{e^{(2\epsilon k)}\|r_k\|_{A}}{\delta/2}.
\end{equation*}
It follows by the above estimate that  
\begin{equation}
\min_{z\in \overline{B(\z_0,\frac{\delta e^{(-2\epsilon k)}} 4)}}|r_k(z)|\geq 
\frac{\|r_k\|_A}{2}\;\;\forall A\subset K \text{ with } \capa(A)>\capa(K)-\delta.\label{minestimate} 
\end{equation}
Now we provide a lower bound for $L^2_\mu$ norms of $r_k$ by integrating the last inequality on a (possibly smaller ball) and picking $A\subset K$ according to the mass density condition \eqref{massdensity2}.

Precisely, set $\rho_k:=e^{(-3k\epsilon)},$ by the hypothesis we can pick $t>0$ and $A_k\subset K$ with $\capa(A_k)>\capa(K)-\delta$ such that $\mu(B_k):=\mu(\overline{B(\eta,\rho_k)})\geq \rho_k^t$ $\forall \eta\in A_k$. We pick $k\geq \bar k$ such that $\rho_k<\frac{\delta e^{(-2\epsilon k)}} 4$, thus using \eqref{minestimate} we get
\begin{align*}
\|r_k\|_{L^2_\mu}^2&\geq \int_{B_k}|r_k|^2 d\mu\geq \min_{z\in B_k}|r_k(z)|^2\mu(B_k)\geq \frac{\|r_k\|_{A_k}^2}{4}\rho_k^t\\
&\geq\frac{e^{(-3tk\epsilon)}}{4}\|r_k\|_{A_k}^2\geq\frac{e^{(-(4+3t)k\epsilon)}}{4}\|r_k\|_{K}^2.
\end{align*}
It follows that $\left(\frac{\|r_k\|_K}{\|r_k\|_{L^2_\mu}}\right)^{1/k}\leq 4^{1/k} e^{((4+3t)\epsilon)},$ by arbitrariness of $\epsilon>0$ we can conclude that
$$\limsup_k \left(\frac{\|r_k\|_K}{\|r_k\|_{L^2_\mu}}\right)^{1/k}\leq 1$$\qed
\end{proof}

\section{Application: a $L^2$ meromorphic Bernstein Walsh Lemma}\label{appl}
For a given compact set $K\subset \C$ we denote by $D_r$ the set $\{z\in \C: g_K(z,\infty)<\log r\}$ and by $\mathscr M_{n}(D_r)$ the class of meromorphic functions having precisely $n$ poles (counted with  their multiplicities) in $D_r.$ Let us denote by $\mathcal R_{k,n}$ the class of rational functions having at most $k$ zeroes and at most $n$ poles (each of them counted with its multiplicity).

It follows by the work of Walsh \cite{WAL}, Saff \cite{Sa71} and Gonchar \cite{Go75} that, given a function $f\in \mathscr C(K)$, where $K$ is a compact regular set, $f$ admits a meromorphic extension $\tilde f\in \mathscr M_{n}(D_r)$ if and only if one has the \emph{overconvergence} of the best uniform norm approximation by rational functions with $n$ poles, that is 
\begin{equation}\label{walshcond}
\limsup_k d_{n,k}(f,K)^{1/k}:=\limsup_k \inf_{r\in \mathcal R_{k,n}}\|f-r\|_K^{1/k}\leq 1/r.
\end{equation}
In the case of a finite measure $\mu$ having compact support $K$ and such that $(K,\mu,P)$ has the rational Bernstein Markov property for any compact set $P$, $P\cap K=\emptyset$, one can rewrite such a theorem checking the overconvergence of best $L^2_\mu$ rational approximations instead of best uniform ones. Notice that if $K=\hat K$ any Bernstein Markov measure supported on $K$ has such a property. More precisely, we can prove the following in the spirit of \cite[Prop. 9.4 ]{levnotes}, where we use the notation $\sing(f)$ to denote the set of poles of the function $f$.

\begin{theorem}[$L^2$ Meromorphic Bernstein Walsh Lemma]
Let $K$ be a compact regular subset of $\C$, let $f\in \mathscr C(K)$ and let $r>1$.
The following are equivalent.
\begin{enumerate}[i)]
\item There exists $\tilde f\in \mathscr M_n(D_r)$ such that $\tilde f|_K\equiv f.$
\item $\limsup_k d_{k,n}^{1/k}(f,K)\leq 1/r.$
\item For any finite Borel measure $\mu$ such that $\support \mu=K$ and $(K,\mu,P)$ has the rational Bernstein Markov property for any compact set $P$ such that $P\cap K=\emptyset$, denoting by $r_{k,n}^\mu$ a best $L^2_\mu$ approximation to $f$ in $\mathcal R_{k,n}$, one has 
$$\limsup_k\left(\|f-r_{k,n}^\mu\|_K\right)^{1/k}\leq 1/r,$$
 provided that $\overline{\{\sing(r_{k,n})\}_k}\cap K=\emptyset.$
\item With the same hypothesis and notations as in \emph{iii)} we have 
$$\limsup_k\left(\|f-r_{k,n}^\mu\|_{L^2_\mu}\right)^{1/k}\leq 1/r,$$
provided that $\overline{\{\sing(r_{k,n})\}_k}\cap K=\emptyset.$
\end{enumerate}
\end{theorem}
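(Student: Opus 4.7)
The plan is to establish the four-way equivalence by the chain $(i) \Leftrightarrow (ii)$, $(ii) \Rightarrow (iv) \Rightarrow (iii) \Rightarrow (ii)$. The equivalence $(i) \Leftrightarrow (ii)$ is the classical overconvergence theorem of Walsh, Saff and Gonchar \cite{WAL, Sa71, Go75}. The implication $(iii) \Rightarrow (ii)$ is immediate since $r_{k,n}^\mu \in \mathcal R_{k,n}$ gives $d_{k,n}(f,K) \leq \|f - r_{k,n}^\mu\|_K$. For $(ii) \Rightarrow (iv)$ I take a best uniform approximation $r_{k,n}^u \in \mathcal R_{k,n}$ as competitor in the $L^2_\mu$ problem to obtain
\[
\|f - r_{k,n}^\mu\|_{L^2_\mu} \leq \|f - r_{k,n}^u\|_{L^2_\mu} \leq \mu(K)^{1/2}\,d_{k,n}(f,K),
\]
and $(iv)$ follows at once.

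The substance of the theorem is the implication $(iv) \Rightarrow (iii)$, which I plan to prove by upgrading $L^2_\mu$-overconvergence to uniform overconvergence by means of the rational Bernstein Markov property. Set $P := \overline{\bigcup_k \sing(r_{k,n}^\mu)}$; by hypothesis $P \cap K = \emptyset$, so $(K,\mu,P)$ satisfies \eqref{RBMP}. The strategy is to show that the sequence $\{r_{k,n}^\mu\}$ is already Cauchy in $\|\cdot\|_K$. Placing $r_{k,n}^\mu$ and $r_{k+1,n}^\mu$ over a common denominator, the difference $\Delta_k := r_{k,n}^\mu - r_{k+1,n}^\mu$ can be written as $u_k/v_k$ with $u_k,v_k \in \wp^{k+n+1}$ and $Z(v_k) \subseteq P$, so after reindexing along $N_k := k+n+1$ the family $\{\Delta_k\}$ belongs to $\mathcal R(P)$. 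The triangle inequality together with $(iv)$ gives, for every $\delta > 0$ and $k$ large,
\[
\|\Delta_k\|_{L^2_\mu} \leq \|f - r_{k,n}^\mu\|_{L^2_\mu} + \|f - r_{k+1,n}^\mu\|_{L^2_\mu} \leq 2(1/r+\delta)^k,
\]
while \eqref{RBMP} promotes this to
\[
\|\Delta_k\|_K \leq (1+\epsilon)^{k+n+1}\|\Delta_k\|_{L^2_\mu} \leq 2(1+\epsilon)^{n+1}\bigl((1+\epsilon)(1/r+\delta)\bigr)^k.
\]
Since $r > 1$, one can fix $\epsilon, \delta > 0$ so small that $\alpha := (1+\epsilon)(1/r+\delta) < 1$; consequently $\sum_k \|\Delta_k\|_K < \infty$ and $\{r_{k,n}^\mu\}$ converges uniformly on $K$ to some $g \in \mathscr C(K)$. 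Because simultaneously $r_{k,n}^\mu \to f$ in $L^2_\mu$, $f$ is continuous and $\support \mu = K$, one must have $g \equiv f$ on $K$. Telescoping $f - r_{k,n}^\mu = \sum_{j \geq 0} \Delta_{k+j}$ yields $\|f - r_{k,n}^\mu\|_K \leq C\alpha^k/(1-\alpha)$, hence $\limsup_k \|f - r_{k,n}^\mu\|_K^{1/k} \leq \alpha$. Letting $\epsilon, \delta \downarrow 0$ concludes $(iii)$.

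The main obstacle I anticipate is the degree bookkeeping: the exponential factor $(1+\epsilon)^{k+n+1}$ produced by the rational Bernstein Markov property on $\{\Delta_k\}$ must not overwhelm the geometric $L^2_\mu$-decay $(1/r+\delta)^k$, and this balance is secured precisely by the assumption $r > 1$. A secondary point worth recording is that, for \eqref{RBMP} to be invoked, the pole locus $P$ must be compact and disjoint from $K$; disjointness is supplied by the hypothesis, and compactness is implicit in the convention (used throughout the paper) that $\mathcal R(P)$ is defined only for compact $P$.
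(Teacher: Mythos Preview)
Your proof is correct and follows the same core argument as the paper: upgrade the $L^2_\mu$-decay of consecutive differences $r_{k,n}^\mu - r_{k\pm1,n}^\mu$ to uniform decay via the rational Bernstein Markov inequality, identify the uniform limit with $f$, and telescope. Your chain $(ii)\Rightarrow(iv)\Rightarrow(iii)\Rightarrow(ii)$ is slightly more economical than the paper's, which proves $(ii)\Rightarrow(iii)$ and $(iv)\Rightarrow(iii)$ by two near-identical arguments, and your explicit degree bookkeeping $N_k=k+n+1$ is a bit more careful; one minor omission is the multiplicative constant $M=M(\epsilon,K,\mu,P)$ that should accompany $(1+\epsilon)^{k+n+1}$ in the RBMP estimate, though this disappears under $\limsup_k(\cdot)^{1/k}$.
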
 
\begin{proof}(\emph{i} $\Leftrightarrow$ \emph{ii.}) The theorem has been proven in \cite{Go75}, see also \cite{BeCa13}.

(\emph{ii} $\Rightarrow$ \emph{iii}.) Let us pick $\rho>r$, we find $C>0$ such that
$$d_{k,n}^{1/k}(f,K)\leq C/\rho^k,\;\;\forall k.$$
Let us pick $s_{k,n}\in \mathcal R_{k,n}$ such that $\|f-s_{k,n}\|_K=d_{k,n}(f,K)$ and set $P_\infty=\overline{\{\sing(s_{k,n})\}_k}$. Notice that
\begin{align}\label{L2estimate}
&\|f-r_{k,n}^\mu\|_{L^2_\mu}\leq \|f-s_{k,n}\|_{L^2_\mu}\leq \mu(K)^{-1/2}\|f-s_{k,n}\|_K\\
=&\mu(K)^{-1/2} d_{k,n}(f,K)\leq \mu(K)^{-1/2}C/\rho^k.\nonumber
\end{align}
In particular it follows that
\begin{equation*}
\|r_{k,n}^\mu-r_{k-1,n}^\mu\|_{L^2_\mu}\leq \|f-r_{k,n}^\mu\|_{L^2_\mu}+\|f-r_{k-1,n}^\mu\|_{L^2_\mu}\leq \frac{\mu(K)^{-1/2}C(1+\rho)}{\rho^k}.
\end{equation*}
We apply the rational Bernstein Markov property to $(K,\mu,P )$, with $P:=P_\infty\cup P_2$, $P_2=\overline{\{\sing(r_{k,n})\}_k}$, in the following equivalent formulation, for any $\epsilon>0$ there exists $M=M(\epsilon,K,\mu,P)$ such that $\|s\|_K\leq M(1+\epsilon)^k\|s\|_{L^2_\mu}$ for any $s\in \mathcal R_{k,n}$, $\sing s\subset P$, $n\leq k$, $\forall k$. Notice that $P_\infty\cap K=\emptyset$ follows by the assumption $\limsup_k d_{k,n}^{1/k}(f,K)\leq 1/r$; \cite{WAL}. We get
\begin{equation}\label{Kestimate}
\|r_{k,n}^\mu-r_{k-1,n}^\mu\|_K\leq M\mu(K)^{-1/2}C(1+\rho) \left(\frac{1+\epsilon}{\rho}\right)^k.
\end{equation}
By equation \eqref{L2estimate} $r_{k,n}^\mu\to f$ in $L^2_\mu$, therefore some subsequence converges almost everywhere with respect to $\mu$. By equation \eqref{Kestimate} we can show that the sequence of functions $\{r_{k,n}\}$ is a Cauchy sequence in $\mathscr C(K)$ thus it has a uniform continuous limit $g$. Therefore $f\equiv g$ and the whole sequence is uniformly converging to $f$ on $K$. Notice that $f\equiv g$ on a carrier of $\mu$, thus on a dense subset of the support $K$ of $\mu$.

Now notice that
\begin{align*}
&\|f-r_{k,n}\|_K\leq \left\|\sum_{j=k+1}^\infty r_{j,n}^\mu-r_{j-1,n}^\mu\right\|_K \leq\sum_{j=k+1}^\infty \|r_{j,n}^\mu-r_{j-1,n}^\mu\|_K\\
\leq& M\mu(K)^{-1/2}C(1+\rho) \sum_{j=k+1}^\infty\left(\frac{1+\epsilon}{\rho}\right)^j= M\mu(K)^{-1/2}C(1+\epsilon)\frac{1+\rho}{\rho-1}\left(\frac{1+\epsilon}{\rho}\right)^k.
\end{align*}  
Therefore we have
$$\limsup_k \|f-r_{k,n}\|_K^{1/k}\leq \limsup_k \left(\frac{M\mu(K)^{-1/2}C(1+\epsilon)(1+\rho)}{\rho-1}\right)^{1/k}\frac{1+\epsilon}{\rho}=\frac{1+\epsilon}{\rho}.$$
The thesis follows letting $\epsilon\to 0^+$ and $\rho\to r^+.$

(\emph{iii} $\Rightarrow$ \emph{ii}.) By definition one has
\begin{align*}
1/r&\geq \limsup_k\left(\|f-r_{k,n}^\mu\|_K\right)^{1/k}\geq \limsup_k\left(\inf_{r\in \mathcal R_{k,n}}\|f-r\|_K\right)^{1/k}\\
&=\limsup_k d_{k,n}^{1/k}(f,K).
\end{align*}
(\emph{iii} $\Rightarrow$ \emph{iv}.)
Simply notice that
\begin{align*}
1/r&\geq \limsup_k\left(\|f-r_{k,n}^\mu\|_K\right)^{1/k}\geq \limsup_k\left(\mu(K)^{1/2}\|f-r_{k,n}^\mu\|_{L^2}\right)^{1/k}\\
&=\limsup_k\left(\|f-r_{k,n}^\mu\|_{L^2}\right)^{1/k}.
\end{align*}

(\emph{iv} $\Rightarrow$ \emph{iii}.) This implication can be proven using a similar reasoning to the one of (\emph{ii} $\Rightarrow$ \emph{iii}).

The sequence $r_{k,n}$ is converging to $f$ in $L^2_\mu$ by assumption, then there exists a subsequence converging to $f$ almost everywhere.

Due to the rational Bernstein Markov property of $\mu$ with respect to $K$ and $P_2$ we have
$$\|r_{k,n}-r_{k-1,n}\|_K\leq M(1+\epsilon)^k\|r_{k,n}-r_{k-1,n}\|_{L^2_\mu}$$
and we can estimate the right hand side as follows 
$$\|r_{k,n}-r_{k-1,n}\|_{L^2_\mu}\leq \|r_{k,n}-f\|_{L^2_\mu}+\|f-r_{k-1,n}\|_{L^2_\mu}\leq C/\rho^k(1+\rho)$$
for a suitable $C>0$ and $\rho>r$. Thus the sequence $r_{k,n}$ has a uniform limit coinciding $\mu$-a.e. with the continuous function $f$ and hence the whole sequence is uniformly converging to $f$, being the two continuous function equal on a carrier of $\mu$ which needs to be dense in $K=\support \mu$.

Now notice, as above, that
\begin{align*}
&\|f-r_{k,n}\|_K\leq \left\|\sum_{j=k+1}^\infty r_{j,n}^\mu-r_{j-1,n}^\mu\right\|_K \leq\sum_{j=k+1}^\infty \|r_{j,n}^\mu-r_{j-1,n}^\mu\|_K\\
\leq& M\mu(K)^{1/2}C(1+\rho) \sum_{j=k+1}^\infty\left(\frac{1+\epsilon}{\rho}\right)^j= M\mu(K)^{1/2}C(1+\epsilon)\frac{1+\rho}{\rho-1}\left(\frac{1+\epsilon}{\rho}\right)^k.
\end{align*}  
Therefore we have
$$\limsup_k \|f-r_{k,n}\|_K^{1/k}\leq \limsup_k \left(\frac{M\mu(K)^{1/2}C(1+\epsilon)(1+\rho)}{\rho-1}\right)^{1/k}\frac{1+\epsilon}{\rho}=\frac{1+\epsilon}{\rho}.$$
The thesis follows letting $\epsilon\to 0^+$ and $\rho\to r^+$.
\qed
\end{proof}

\section*{Acknowledgements}
The author deeply thanks Norman Levenberg (Indiana University, Bloomington \textsc{IN}, USA) who supervised this work for his help. Part of the present research has been carried out during a visit to Universit\'e Aix-Marseille, Marseille FR. The author thanks Frank Wielonsky who made this possible and all people from the Department of Mathematics. The anonymous referees pointed out some important remarks, corrections and suggested Example 1 (d), the authors thanks them for the valuable comments.

\bibliographystyle{abbrv}      
\bibliography{references}

\end{document}